\numberwithin{equation}{section}
\numberwithin{figure}{section}
\theoremstyle{plain}
\newtheorem{thm}{\protect\theoremname}[section]
  \theoremstyle{plain}
  \theoremstyle{definition}
  \theoremstyle{plain}
  \newtheorem{cor}[thm]{\protect\corollaryname}
  \theoremstyle{plain}
	\newtheorem{rem}[thm]{\protect\remarkname}
  \theoremstyle{plain}
	\theoremstyle{plain}
  \providecommand{\definitionname}{Definition}
  \providecommand{\lemmaname}{Lemma}
  \providecommand{\theoremname}{Theorem}
  \providecommand{\corollaryname}{Corollary}
  \providecommand{\remarkname}{Remark}
  \providecommand{\propositionname}{Proposition}
  \providecommand{\examplename}{Example}
	\DeclareMathOperator{\loc}{loc}
	\DeclareMathOperator{\cp}{cap}
\newcommandx{\norm}[1][1=\cdot]{\left|{#1}\right|}
\newcommandx{\supnorm}[2][1=\cdot,2=]{\nnorm[#1]_{\infty,#2}}
\newcommandx{\nnorm}[1][1=\cdot]{\left|\left|{#1}\right|\right|}
\newcommandx{\Set}[2][2=]{
    \ifthenelse{\isempty{#2}}
        {\left\{ {#1} \right\}}
        {\left\{  {#1}  \, \middle| \, {#2} \right\}}
}
\newcommand{\R}{\mathbb R}
\newcommand{\Haus}{\mathcal H}
\newcommand{\N}{\mathbb N}
\DeclareMathOperator{\Supp}{Supp}
\theoremstyle{plain}
\theoremstyle{definition}
\newtheorem{dfn}[thm]{Definition}
\theoremstyle{remark}
\begin{document}
\title[On Lipschitz approximations in second order Sobolev spaces]{On Lipschitz approximations in second order Sobolev spaces and the change of variables formula}

\author{Paz Hashash and Alexander Ukhlov}

\begin{abstract}
In this paper we study approximations of functions of Sobolev spaces 
$W^2_{p,\loc}(\Omega)$, $\Omega\subset\mathbb R^n$, by Lipschitz continuous functions. We prove that if $f\in W^2_{p,\loc}(\Omega)$, $1\leq p<\infty$,
then there exists a sequence of closed sets $\{A_k\}_{k=1}^{\infty},A_k\subset A_{k+1}\subset \Omega$, 
such that the restrictions $f \vert_{A_k}$ are Lipschitz continuous functions and $\cp_p\left(S\right)=0$, $S=\Omega\setminus\bigcup_{k=1}^{\infty}A_k$. Using these approximations we prove the change of variables formula in the Lebesgue integral for mappings of Sobolev spaces $W^2_{p,\loc}(\Omega;\mathbb R^n)$ with the Luzin capacity-measure $N$-property.
\end{abstract}
\maketitle
\footnotetext{\textbf{Key words and phrases:} Sobolev spaces, Geometric measure theory, Potential theory.} 
\footnotetext{\textbf{2010 Mathematics Subject Classification:} 28A75, 31B15, 46E35.}

\section{\textbf{Introduction}}
In this paper we study approximations of the second order Sobolev spaces 
$W^2_{p,\loc}(\Omega),$
where 
$\Omega\subset\mathbb R^n$ 
is an open set, by Lipschitz continuous functions. The Lipschitz approximations in Sobolev spaces $W^1_{p,\loc}(\Omega)$ have significant applications in geometric measure theory \cite{H93,VU96} and in the analysis on metric measure spaces \cite{HK98,He01,HKST}. 

In \cite{H93} it was proved that if 
$f\in W^1_{1,\loc}(\Omega)$,
then there exists a sequence of closed sets 
$\{A_k\}_{k=1}^{\infty},A_k\subset A_{k+1}\subset \Omega$, 
for which the restrictions 
$f^* \vert_{A_k}$
are Lipschitz continuous functions on the sets 
$A_k$ 
and $|S|=0$, $S=\Omega\setminus\bigcup_{k=1}^{\infty}A_k$, where we denote by $|S|$ the $n$-dimensional Lebesgue measure of the set $S\subset\R^n$. Here $f^*$ stands for the precise representative of the function $f$ (see 
$\eqref{precise representative}$ below). 

In the case of the second order Sobolev spaces $W^2_{1,\loc}(\Omega)$ we prove the following refined version of this Lipschitz approximation:
{ \it Let $f\in W^2_{1,\loc}(\Omega)$. Then there exists a sequence of closed sets 
$\{A_k\}_{k=1}^{\infty},A_k\subset A_{k+1}\subset \Omega$, such that the restrictions 
$f^*\vert_{A_k}$ are Lipschitz continuous functions $\Haus^{n-1}$-a.e. in $A_k$ and $\Haus^{n-1}\left(S\right)=0$, $S=\Omega\setminus\bigcup_{k=1}^{\infty}A_k$,
where $\Haus^{n-1}$ is the $(n-1)$-Hausdorff measure.}

Using this Lipschitz approximation we obtain the following change of variables formula in the Lebesgue integral for twice weakly differentiable mappings: 
Let $\varphi:\Omega\to\mathbb R^n$ be a mapping which belongs to the Sobolev space
$W^2_{1,\loc}(\Omega;\mathbb R^n)$. Then there exists a Borel set 
$S\subset \Omega$, $\Haus^{n-1}\left(S\right)=0$, 
such that the mapping
$\varphi:\Omega\setminus S \to \mathbb R^n$ 
has the Luzin $N$-property (an image of a set of Lebesgue measure zero has Lebesgue measure zero) and the change of variables formula
\begin{equation}
\label{eq:change of variables formula 1}
\int\limits_A u\circ\varphi (x)|J(x,\varphi)|~dx=\int\limits_{\mathbb R^n\setminus \varphi(S)} u(y)N_\varphi(A,y)~dy,
\end{equation}
where 
$N_\varphi(A,y)$ 
is the multiplicity function which is defined as the number of preimages of $y$ in $A$ under $\varphi$ and $J(x,\varphi)=\det\left(D\varphi(x)\right)$ is the Jacobian of $\varphi$ at the point $x$, holds for every  measurable set 
$A\subset \Omega$ 
and every nonnegative measurable function 
$u: \mathbb R^n\to\mathbb R$.

If the mapping $\varphi$ possesses the Luzin Hausdorff-Lebesgue measure $N$-property (an image of a set of $\Haus^{n-1}$-Hausdorff measure zero has $n$-dimensional Lebesgue measure zero), then 
$|\varphi (S)|=0$ and the integral on the right hand side of $\eqref{eq:change of variables formula 1}$
can be rewritten as an integral on $\R^n$.

In the case of the second order Sobolev spaces $W^2_{p,\loc}(\Omega)$, $1<p<\infty$, we prove the refined version of the Lipschitz approximation in capacitory terms:
{\it Let
$f\in W^2_{p,\loc}(\Omega)$, $1< p<\infty$.
Then there exists a sequence of closed sets 
$\{A_k\}_{k=1}^{\infty},A_k\subset A_{k+1}\subset \Omega$, 
such that the restrictions 
$f^* \vert_{A_k}$ 
are Lipschitz continuous functions $p$-quasieverywhere in $A_k$(means that outside a set of $\cp_p$ zero)} and $\cp_p\left(S\right)=0$, $S=\Omega\setminus\bigcup_{k=1}^{\infty}A_k$,
where $\cp_p$ is the $p$-capacity.

Using this Lipschitz approximation we obtain the following capacitory version of the change of variables formula in the Lebesgue integral for twice weakly differentiable mappings: 
Let $\varphi:\Omega\to\mathbb R^n$ be a mapping which belongs to the Sobolev space
$W^2_{p,\loc}(\Omega;\mathbb R^n)$, $1< p<\infty$. Then there exists a Borel set 
$S\subset \Omega,\cp_{p}(S)=0$, 
such that the mapping
$\varphi:\Omega\setminus S \to \mathbb R^n$ 
has the Luzin $N$-property and the change of variables formula
\begin{equation}
\label{eq:change of variables formula 2}
\int\limits_A u\circ\varphi (x)|J(x,\varphi)|~dx=\int\limits_{\mathbb R^n\setminus \varphi(S)} u(y)N_\varphi(A,y)~dy,
\end{equation}
holds for every  measurable set 
$A\subset \Omega$ 
and every nonnegative measurable function 
$u: \mathbb R^n\to\mathbb R$.

If the mapping $\varphi$ possesses the Luzin capacity-measure $N$-property (an image of a set of capacity zero has Lebesgue measure zero), then $|\varphi (S)|=0$ and the integral on the right hand side of $\eqref{eq:change of variables formula 2}$
can be rewritten as an integral on $\R^n$. 
Since for any 
$E\subset\R^n$ 
we have the inequality 
$|E|\leq \cp_p(E),1<p<\infty$ \cite{K21}, then mappings which possess the Luzin measure $N$-property have the Luzin capacity-measure $N$-property.
 
Note, mappings which possess the Luzin capacity-capacity $N$-property (an image of a set of capacity zero has capacity zero) and so possess the Luzin capacity-measure $N$-property arise, in particular, in the geometric theory of composition operators on Sobolev spaces, see, for example, \cite{GU20,U93,VU04}. The geometric theory of composition operators on Sobolev spaces have applications in the spectral theory of elliptic operators, see, for example, \cite{GU17}.  We note also the generalized quasiconformal mappings, so-called $Q$-mappings, which possess the Luzin capacity-capacity $N$-property. The theory of $Q$-mappings is intensively developed in the last decades. See, for example, \cite{MRSY09}. 

On the base of the refined Lipschitz approximation we obtain the Luzin type theorem for capacity for second order Sobolev spaces. It is known \cite{BH93,evans2015measure,HKM,M} that if  $f\in W^1_{p,\loc}(\Omega)$, then for each $\varepsilon>0$ there exists an open set $U_{\varepsilon}$ of $p$-capacity less than $\varepsilon$ such that $f^*$ is continuous on the set $\Omega\setminus U_{\varepsilon}$.
We prove the following refined version: 
If $f\in W^2_{p,\loc}(\Omega)$, then for each 
$\varepsilon>0$ there exists an open set $U_{\varepsilon}$ of $p$-capacity less than $\varepsilon$ such that $f^*$ is Lipschitz continuous on the set $\Omega\setminus U_{\varepsilon}$.

The refined Luzin theorem was obtained in \cite{Ma93}. It was proved that Sobolev functions coincide with H\"older continuous functions on the complement of a set of arbitrary small capacity. This result plays a crucial role in the refined versions of the change of variables formula for the Sobolev mappings \cite{Ma94,MM95}. 
In the case of higher order Sobolev spaces the Luzin type theorem considered in \cite{Z89}. The refined Luzin theorem on the H\"older continuity of higher order Sobolev spaces was proved in \cite{BHS02}.
In the recent work \cite{BKK15} the Luzin type theorem for functions of $W^{n}_1(\mathbb R^n)$ in the terms of the Hausdorff content was proved.

We prove the following refined version of the Luzin type theorem:
{\it If 
$f\in W_{1,\loc}^2(\Omega)$ 
and 
$B\subset \Omega$ 
is a Borel set such that 
$\Haus^{n-1}(B)<\infty$, 
then for every 
$\varepsilon>0$ 
there exists a Borel set 
$B_{\varepsilon}\subset B$ 
such that 
$f^* \vert_{B_{\varepsilon}}$ 
is Lipschitz continuous $\Haus^{n-1}-$a.e in $B_\epsilon$  and 
$\Haus^{n-1}(B\setminus B_{\varepsilon})<\varepsilon$.}

In the case of the second order Sobolev spaces $W^2_{p,\loc}(\Omega)$, $1<p<\infty$, we prove:
{\it If $f\in W^2_{p,\loc}(\Omega),1<p<\infty$,
and 
$A\subset\Omega$
is a compact set with an additional assumption
(see Corollary $\ref{cor:Luzin property for capacity}$), then for every 
$\varepsilon>0$ 
there exists a Borel set 
$A_{\varepsilon}\subset A$ 
such that 
$f^*\vert_{A_\varepsilon}$ 
is Lipschitz continuous 
$p-$quasieverywhere in $A_\varepsilon$ and 
$\cp_{p}(A\setminus A_{\varepsilon})<\varepsilon$. }
  
The suggested methods are based in the non-linear potential theory \cite{HKM,M} and the Chebyshev type inequality for capacity \cite{evans2015measure,VCh1, VCh2}. This paper is organized as follows:
Section 2 contains definitions and we consider Lipschitz approximations in Sobolev spaces $W^2_{p,\loc}(\Omega)$ and prove Luzin type theorems for Lipschitz $p-$quasicontinuity(see Definition $\ref{def:Lipschitz quasicontinuity}$). In Section 3 we prove the change of variables formula for twice weakly differentiable mappings which have the Luzin capacity-measure $N$-property.

\section{\textbf{Lipschitz approximations of Sobolev functions}}

Let 
$E\subset \mathbb{R}^n$
be a measurable set. Recall that the Lebesgue space 
$L_{p}(E),1\leq p<\infty$, 
is defined as the space of $p$-integrable functions with the norm 
$$ \|f\mid L_p(E)\|=
\left(\int\limits_{E}|f(x)|^p~dx\right)^{1/p}, \,\, 1\leq p<\infty.
$$
By 
$L_{p,\loc}(E)$ 
we denote the space of locally $p$-integrable functions, means that,
$f\in L_{p,\loc}(E)$ 
if and only if  
$f\in L_p(F)$ 
for every compact subset 
$F\subset E.$

Let 
$\Omega\subset \R^n$
be an open set. The Sobolev space 
$W^m_p(\Omega),m\in \N,1\leq p<\infty$,
is defined as the normed
space of functions $f\in L_p(\Omega)$ such that the partial derivatives of order less than or equal to $m$
exist in the weak sense and belong to $L_p(\Omega)$. The space is equipped with the norm
$$
\|f\mid W^m_p(\Omega)\|=
\sum\limits_{|\alpha|\leq m}\left(\int\limits_{\Omega} |D^{\alpha}f(x)|^p\,dx\right)^{\frac{1}{p}}<\infty,
$$   
where 
$D^{\alpha} f$ 
is the weak derivative of order $\alpha$ of the function $f$ which is defined by the following formula:
$$
\int\limits_{\Omega} f D^{\alpha}\eta~dx=(-1)^{|\alpha|}\int\limits_{\Omega} (D^{\alpha}f) \eta~dx, \quad \forall \eta\in C_c^{\infty}(\Omega),
$$
where
$\alpha:=(\alpha_1,\alpha_2,...,\alpha_n)$ 
is a multiindex, 
$\alpha_i=0,1,...,$ 
$|\alpha|=\alpha_1+\alpha_2+...+\alpha_n$.
The space 
$C_c^{\infty}(\Omega)$ 
is the space of smooth functions with compact support in 
$\Omega.$ 
The Sobolev space 
$W^m_{p,\loc}(\Omega)$ 
is defined as follows:
$f\in W^m_{p,\loc}(\Omega)$ 
if and only if 
$f\in W^m_p(U)$ 
for every open and bounded set 
$U\subset  \Omega$ 
such that 
$\overline{U}  \subset \Omega$, where $\overline{U} $ stands for the topological closure of the set $U$.

Let us recall the definition of the capacity \cite{evans2015measure,HKM,M}.
Suppose $\Omega$ is an open set in $\R^n$ and  $F\subset\Omega$ is a compact set. The $p$-capacity of $F$ with respect to $\Omega$ is defined by
\begin{equation}
\cp_p(F;\Omega) =\inf\{\|\nabla f|L_p(\Omega)\|^p\},
\end{equation} 
where the inferior is taken over all 
$f\in C_c^{\infty}(\Omega)$ such that 
$f\geq 1$ on $F$. 
If 
$U\subset\Omega$ 
is an open set, we define
\begin{equation}
\cp_{p}(U;\Omega)=\sup\{\cp_{p}
(F;\Omega)\,:\,F\subset U,\,\, F\,\,\text{is compact}\}.
\end{equation}
In the case of an arbitrary set 
$E\subset\Omega$
we define
\begin{equation}
\label{eq:outer regularity of capacity}
\cp_{p}(E;\Omega)=\inf\{\cp_{p}(U;\Omega)\, :\,\,E\subset U\subset\Omega,\,\, U\,\,\text{is open}\}.
\end{equation}
The $p$-capacity 
is an outer measure on
$\Omega$,
i.e it is a set function
which is defined on every subset of 
$\Omega$
such that
$\cp_p(\emptyset;\Omega)=0$
and it is $\sigma$-subadditive, means that, if 
$E\subset \bigcup_{l\in \N}E_l$,
then 
$\cp_p\left(E;\Omega\right)\leq \sum_{l\in \N}\cp_p\left(E_l;\Omega\right).$
We write for short
$\cp_p(E)=\cp_p(E;\R^n),E\subset\R^n.$

The notion of the $p$-capacity permits us to refine the notion of Sobolev functions \cite{HM72,M}. Let 
$f\in L_{1,\loc}(\Omega).$
The \textbf{precise representative} of 
$f$ is defined by 
\begin{equation}
\label{precise representative}
f^*:\Omega\to \R,\quad f^*(x):=
\begin{cases}
\lim_{\epsilon\downarrow 0}f_{B(x,\epsilon)},\quad & \text{if the limit exists};\\
0, \quad & \text{otherwise};
\end{cases}
\end{equation}
where
\begin{equation}
f_{B(x,\epsilon)}:=\fintop_{B(x,\epsilon)}f(y)dy=\frac{1}{\left|B(x,\epsilon)\right|}\intop_{B(x,\epsilon)}f(y)dy.
\end{equation}    
Recall that a function 
$f$ 
is termed \textbf{p-quasicontinuous} in an open set 
$\Omega$ 
if and only if for every 
$\varepsilon >0$ 
there exists an open  set 
$U_{\varepsilon}\subset \Omega$ 
such that the $p$-capacity of 
$U_{\varepsilon}$ 
is less than 
$\varepsilon$ 
and on the set 
$\Omega\setminus U_{\varepsilon}$ 
the function  $f$ is continuous. 
If $f\in W^1_{p,\loc}(\Omega)$, then $f^*$ is $p-$quasicontinuous and there exists a Borel set $E$ such that $\cp_p(E)=0$ and $f^*(x)=\lim_{\epsilon\to 0^+}f_{B(x,\epsilon)},\forall x\in \Omega\setminus E$  \cite{evans2015measure,HM72,HKM,M}.
In case $f\in W^1_{p,\loc}(\Omega)$ the refined function 
$f^*$ 
is called \textbf{the unique quasicontinuous representation} (or \textbf{the canonical representation}) of  $f$.
  
\subsection{Lipschitz $p$-quasicontinuous functions}

Let us recall the following two theorems from the theory of Sobolev spaces, see for example \cite{evans2015measure,K21,M}: 

\begin{thm}
$($The local Poincar\'e inequality for functions of $W^1_{1,\loc}(\mathbb R^n)$$)$.\\ 
There exists a constant 
$C=C(n)$ 
such that 
\begin{equation}
\fintop_{B(x,r)}|f(y)-f_{B(x,r)}|dy\leq Cr\fintop_{B(x,r)}|\nabla f(y)|dy,
\end{equation}
for every ball $B(x,r)\subset \R^n$ and every 
$f\in W^1_1(B(x,r))$.       
\end{thm}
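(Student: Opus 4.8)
The plan is to normalize to the unit ball, prove the estimate first for smooth functions by integrating the gradient along radial segments, and then pass to the limit using density of smooth functions in $W^1_1$.

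First I would reduce to the case of $B(0,1)$. Given a ball $B(x,r)$ and $f\in W^1_1(B(x,r))$, set $g(w)=f(x+rw)$ for $w\in B(0,1)$. A change of variables gives $g_{B(0,1)}=f_{B(x,r)}$, shows $\fintop_{B(x,r)}|f-f_{B(x,r)}|\,dy=\fintop_{B(0,1)}|g-g_{B(0,1)}|\,dw$, and, since $\nabla g(w)=r\,\nabla f(x+rw)$, shows $r\fintop_{B(x,r)}|\nabla f|\,dy=r\fintop_{B(0,1)}|\nabla g|\,dw$. Hence it suffices to find $C=C(n)$ with
\[
\fintop_{B(0,1)}|g-g_{B(0,1)}|\,dw\leq C\fintop_{B(0,1)}|\nabla g(w)|\,dw .
\]

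Next, assume $g\in C^\infty(\overline{B})$ with $B=B(0,1)$. Fix $u\in B$, and for $v\in B$ put $\omega=(v-u)/|v-u|$; letting $V$ denote $|\nabla g|$ extended by zero outside $B$, the fundamental theorem of calculus gives $|g(u)-g(v)|\leq\int_0^{|u-v|}|\nabla g(u+s\omega)|\,ds\leq\int_0^\infty V(u+s\omega)\,ds$, so that
\[
|g(u)-g_B|\leq\fintop_B|g(u)-g(v)|\,dv\leq\frac{1}{|B|}\int_B\left(\int_0^\infty V(u+s\omega)\,ds\right)dv .
\]
Passing to polar coordinates centered at $u$, integrating out the radial variable (which contributes a factor at most $(\diam B)^n/n$), and then reinserting the Jacobian $s^{n-1}$ to recognize the remaining radial integral as a volume integral, one obtains the Riesz-potential bound
\[
|g(u)-g_B|\leq C(n)\int_B\frac{|\nabla g(y)|}{|u-y|^{n-1}}\,dy .
\]

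Finally I would integrate this inequality over $u\in B$, apply Fubini, and use $n-1<n$ to get $\sup_{y\in B}\int_B|u-y|^{1-n}\,du\leq C(n)<\infty$, which yields $\fintop_B|g-g_B|\,du\leq C(n)\fintop_B|\nabla g|\,dy$; undoing the normalization restores the factor $r$ and proves the stated inequality with a dimensional constant. The general case $f\in W^1_1(B(x,r))$ then follows by approximating $g$ in the $W^1_1(B)$-norm by smooth functions $g_j$ and letting $j\to\infty$, since $g_j\to g$ and $\nabla g_j\to\nabla g$ in $L^1(B)$ force convergence of both averaged quantities. This is a classical estimate, so I do not anticipate a genuine obstacle; the only spot requiring a little care is the polar-coordinate computation and interchange of integrals producing the Riesz-potential bound, together with the elementary check that $\int_B|u-y|^{1-n}\,du$ is bounded uniformly in $y\in B$ — which is precisely where the exponent $n-1<n$ enters.
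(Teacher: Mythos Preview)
The paper does not supply a proof of this statement at all: it is quoted as a known fact with references to \cite{evans2015measure,K21,M}, so there is no ``paper's own proof'' to compare against. Your argument is the standard Riesz-potential proof and is correct in outline and in detail, with one small slip: in the scaling step you write $r\fintop_{B(x,r)}|\nabla f|\,dy=r\fintop_{B(0,1)}|\nabla g|\,dw$, whereas the chain rule $\nabla g(w)=r\nabla f(x+rw)$ actually gives $\fintop_{B(0,1)}|\nabla g|\,dw=r\fintop_{B(x,r)}|\nabla f|\,dy$ (no extra $r$ on the right). This is evidently a typo, since the reduced inequality you state immediately afterward,
\[
\fintop_{B(0,1)}|g-g_{B(0,1)}|\,dw\leq C\fintop_{B(0,1)}|\nabla g(w)|\,dw,
\]
is exactly the correct normalization, and the remainder of the argument (pointwise Riesz bound, Fubini, uniform integrability of $|u-y|^{1-n}$ over $B$, density of smooth functions) is fine.
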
 

\begin{thm}$($The Chebyshev type inequality for capacity$)$.
\\Assume 
$p\geq 1,f\in W^1_p(\R^n)$ 
and let  
$\alpha>0.$ 
Define
\begin{equation*}
R_\alpha:=\Set{x\in \R^n}[\sup_{\epsilon>0}f_{B(x,\epsilon)}\leq \alpha].
\end{equation*}
Then
\begin{equation*}
\cp_{p}(\R^n\setminus R_\alpha)\leq \frac{C(n,p)}{\alpha^p}
\|\nabla f|L_p(\R^n)\|^p.
\end{equation*}         
\end{thm}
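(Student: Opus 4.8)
The plan is to reduce the inequality to a \emph{scale-uniform} pointwise bound for $\sup_{\epsilon>0}f_{B(x,\epsilon)}$ by the Riesz potential of $|\nabla f|$, and then to estimate the $p$-capacity of the superlevel set of that potential by using the potential itself (essentially) as a test function. Two preliminary remarks. The set $U:=\R^{n}\setminus R_{\alpha}=\Set{x\in\R^{n}}[\sup_{\epsilon>0}f_{B(x,\epsilon)}>\alpha]$ is \emph{open}: for each fixed $\epsilon$ the map $x\mapsto f_{B(x,\epsilon)}$ is continuous (since $f\in L_{1,\loc}(\R^{n})$), and $U=\bigcup_{\epsilon>0}\{x\in\R^{n}:f_{B(x,\epsilon)}>\alpha\}$ is a union of open sets. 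Moreover, by the $\sigma$-subadditivity of $\cp_{p}$ and \eqref{eq:outer regularity of capacity} it suffices to bound $\cp_{p}(K)$ for an arbitrary compact $K\subset U$, with a constant independent of $K$.

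The heart of the argument is the pointwise estimate
\[
\sup_{\epsilon>0}f_{B(x,\epsilon)}\ \le\ C(n)\int_{\R^{n}}\frac{|\nabla f(y)|}{|x-y|^{\,n-1}}\,dy\ =\ C(n)\,I_{1}\!\bigl(|\nabla f|\bigr)(x),\qquad x\in\R^{n},
\]
where $I_{1}$ is the Riesz potential of order $1$. I would prove it by telescoping dyadic averages \emph{upward}: since $f\in L_{p}(\R^{n})$, for every $x$ one has $|f_{B(x,R)}|\le\fintop_{B(x,R)}|f|\le(\omega_{n}R^{n})^{-1/p}\|f\mid L_{p}(\R^{n})\|\to0$ as $R\to\infty$, whence $f_{B(x,\epsilon)}=\sum_{j\ge0}\bigl(f_{B(x,2^{j}\epsilon)}-f_{B(x,2^{j+1}\epsilon)}\bigr)$; each increment is controlled, by passing to the larger ball and applying the local Poincar\'e inequality stated above, via
\[
\bigl|f_{B(x,\rho)}-f_{B(x,2\rho)}\bigr|\ \le\ 2^{n}\fintop_{B(x,2\rho)}\bigl|f-f_{B(x,2\rho)}\bigr|\ \le\ C(n)\,\rho\fintop_{B(x,2\rho)}|\nabla f|.
\]
Summing these and comparing the resulting dyadic sum with $\int_{0}^{\infty}r^{-n}\bigl(\intop_{B(x,r)}|\nabla f|\bigr)\,dr$, Fubini's theorem produces the Riesz potential on the right. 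Consequently $U\subset V:=\Set{x\in\R^{n}}[C(n)\,I_{1}(|\nabla f|)(x)>\alpha]$, which is open by lower semicontinuity of $I_{1}(|\nabla f|)$.

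It remains to estimate $\cp_{p}(V)$. For $p\ge n$ there is nothing to prove, since a logarithmic (for $p=n$) or linear (for $p>n$) cutoff shows that $\cp_{p}(\,\cdot\,;\R^{n})$ vanishes on every compact set, hence on every subset of $\R^{n}$. For $1<p<n$ set $w:=\alpha^{-1}C(n)\,I_{1}(|\nabla f|)$, so $w>1$ on $V$. Then $\nabla w$ equals, up to dimensional constants, the vector of Riesz transforms of $|\nabla f|$, so $\|\nabla w\mid L_{p}(\R^{n})\|\le C(n,p)\,\alpha^{-1}\|\nabla f\mid L_{p}(\R^{n})\|$ by the $L_{p}$-boundedness of Calder\'on--Zygmund operators, while $w\in L_{np/(n-p)}(\R^{n})$ by the Hardy--Littlewood--Sobolev inequality. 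Given a compact $K\subset V$, truncating $w$ by a cutoff $\chi_{R}\equiv1$ on $B(0,R)\supset K$ and mollifying yields an admissible competitor for $\cp_{p}(K;\R^{n})$, with error $\|w\,\nabla\chi_{R}\mid L_{p}(\R^{n})\|\le C\,\|w\mid L_{np/(n-p)}(\{|x|>R\})\|\to0$; hence $\cp_{p}(K)\le\|\nabla w\mid L_{p}(\R^{n})\|^{p}\le C(n,p)\,\alpha^{-p}\|\nabla f\mid L_{p}(\R^{n})\|^{p}$, and taking the supremum over $K$ settles $1<p<n$. (Alternatively one may replace $I_{1}(|\nabla f|)$ by the Hardy--Littlewood maximal function $Mf$, invoking Kinnunen's theorem that $M\colon W^{1}_{p}\to W^{1}_{p}$ is bounded for $p>1$.)

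The step I expect to be the real obstacle is the endpoint $p=1$, which the last paragraph does not cover: for $g\in L_{1}(\R^{n})$ the operator $\nabla I_{1}$ is only of weak type $(1,1)$, so $w$ is not an admissible $W^{1}_{1}$ test function and the Calder\'on--Zygmund estimate is unavailable. Here one must argue inside Maz'ya's theory of the $1$-capacity --- e.g. via the coarea formula for $\cp_{1}$ and the identity $\cp_{1}(\overline{B(x,r)};\R^{n})=n\omega_{n}r^{n-1}$, together with a Calder\'on--Zygmund type decomposition of $|\nabla f|$ --- and I would expect most of the work to sit there. The only other point needing (routine) care is that the telescoping above is carried all the way to scale $\infty$; this is precisely where $f\in L_{p}(\R^{n})$ enters, and it is why the inequality is meaningful exactly for $p<n$: the bound $\intop_{B(x,r)}|\nabla f|\le c\,r^{\,n-n/p}\|\nabla f\mid L_{p}(\R^{n})\|$ makes the dyadic series converge with ratio $2^{\,1-n/p}$, which is borderline at $p=n$, consistently with $\cp_{n}(\,\cdot\,;\R^{n})\equiv0$.
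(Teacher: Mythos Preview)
The paper does not prove this theorem at all: it is quoted from the literature (Evans--Gariepy, Kinnunen, Vodop'yanov--Chernikov) and then invoked as a black box in the proof of Theorem~\ref{lem:LipCap}. So there is no ``paper's own proof'' to compare against; I can only assess your argument on its merits and against the standard proofs in those references.

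For $1<p<n$ your route is correct and is essentially one of the classical arguments: the telescoping-to-infinity plus Poincar\'e gives the pointwise domination $\sup_{\epsilon>0}f_{B(x,\epsilon)}\le C(n)\,I_1(|\nabla f|)(x)$, and then $w=\alpha^{-1}C(n)I_1(|\nabla f|)$ is (after cutoff and mollification) an admissible competitor for $\cp_p$ on any compact $K\subset V$, with $\|\nabla w\mid L_p\|$ controlled by Calder\'on--Zygmund. The observation that $\cp_p(\,\cdot\,;\R^n)\equiv 0$ for $p\ge n$ is also correct, so those ranges are disposed of. Minor points you should make explicit in a final write-up: the lower semicontinuity of $I_1(|\nabla f|)$ guarantees $\min_K w>1$ on compact $K\subset V$, which is what makes the mollified cutoff $\ge 1$ on $K$; and the telescoping series converges at every $x$ where $I_1(|\nabla f|)(x)<\infty$, the inequality being trivial where it is infinite.

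The genuine gap is exactly where you flag it: $p=1$. Your proposal does not contain a proof in that case, only a list of ingredients (coarea for $\cp_1$, the value of $\cp_1$ on balls, a Calder\'on--Zygmund decomposition of $|\nabla f|$). That is not yet an argument, and the standard completions are not entirely routine. In Maz'ya's framework one proves the weak-type inequality $\cp_1\bigl(\{I_1 g>\alpha\}\bigr)\le C(n)\,\alpha^{-1}\|g\mid L_1(\R^n)\|$ directly, typically via a Besicovitch/Vitali covering of the superlevel set by balls $B(x_i,r_i)$ with $r_i^{1-n}\int_{B(x_i,5r_i)}g\ge c\,\alpha$, summing $\cp_1(\overline{B(x_i,r_i)})=n\omega_n r_i^{\,n-1}$ and using bounded overlap; an alternative is the boxing inequality. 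Either way, the $p=1$ endpoint requires a separate covering/geometric argument rather than the functional-analytic test-function trick, and as written your proposal leaves that case open.
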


The following theorem gives us an approximation of twice weakly differentiable functions by Lipschitz continuous functions.

\begin{thm}
\label{lem:LipCap}
Let 
$\Omega\subset \R^n$
be an open set and 
$f\in W^2_{p,\loc}(\Omega),1\leq p<\infty$.
Then there exists a sequence of closed sets 
$\{C_k\}_{k=1}^{\infty}$
such that for every
$k=1,2,...$,
$C_k\subset C_{k+1}\subset\Omega$,
the restriction 
$f^* \vert_{C_k}$ 
is a Lipschitz continuous function defined $p$-quasieverywhere in
$C_k$
and 
\begin{equation}
\cp_{p}\left(\Omega\setminus\bigcup_{k=1}^{\infty}C_k\right)=0.
\end{equation}
\end{thm}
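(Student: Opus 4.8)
The plan is to reduce the second-order statement to the known first-order Lipschitz-approximation and quasicontinuity results applied to the gradient $\nabla f$. The key observation is that $f\in W^2_{p,\loc}(\Omega)$ means $\nabla f=(\partial_1 f,\dots,\partial_n f)\in W^1_{p,\loc}(\Omega;\R^n)$, and a function whose gradient is bounded is Lipschitz; so control of the (quasicontinuous representative of the) gradient on a closed set should translate into a Lipschitz estimate for $f^*$ on that set, at least after a telescoping over balls.

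First I would fix an exhaustion $\Omega=\bigcup_j \Omega_j$ by open sets $\Omega_j$ with $\overline{\Omega_j}$ compact in $\Omega_{j+1}$, so that $f\in W^2_p(\Omega_j)$ for each $j$; it suffices to produce the sets $C_k$ locally on each $\Omega_j$ and then take an appropriate diagonal union, since capacity is countably subadditive. On a fixed $\Omega_j$, apply the first-order theory to each component $\partial_i f\in W^1_p(\Omega_j)$: its precise representative $(\partial_i f)^*$ is $p$-quasicontinuous, and by the $p$-quasicontinuity (equivalently, by the Chebyshev-type inequality for capacity stated above applied to $|\nabla f|$ and its first derivatives), for each $k$ there is an open set $U_k$ with $\cp_p(U_k)<2^{-k}$ such that $|\nabla f|^*$ is bounded by some constant $M_k$ on $\Omega_j\setminus U_k$ and, moreover, $x$ is a Lebesgue point of $\nabla f$ there. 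Set $C_k':=\overline{\Omega_j\setminus U_k}\cap\Omega_j$ (or the closed set furnished directly by the first-order Lipschitz-approximation theorem in \cite{H93} applied to each $\partial_i f$, intersected over $i$), a closed subset of $\Omega_j$ on which $|\nabla f|^*\le M_k$ off a $\cp_p$-null set.

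The main step is then the Lipschitz estimate for $f^*$ on $C_k'$. For $x,y\in C_k'$ that are Lebesgue points of $f$ and of $\nabla f$, I would write $f^*(x)-f^*(y)$ as a telescoping sum of differences of ball-averages $f_{B(z_m,r_m)}$ along a chain of balls of geometrically decreasing radii running from a ball around $x$ to a ball around $y$ through a ball of radius comparable to $|x-y|$ (the standard Lebesgue-point/chaining argument), and bound each increment $|f_{B(z,2r)}-f_{B(z,r)}|$ by the Poincaré inequality in terms of $r\fintop_{B(z,r)}|\nabla f|\,dy$. The difficulty is that the intermediate balls need not be centered in $C_k'$, so the pointwise bound $|\nabla f|^*\le M_k$ is not directly available on them; I expect this to be the crux of the argument, and the way around it is the capacitary Chebyshev inequality, which controls not $|\nabla f|$ but the maximal function $\sup_{\epsilon}(\,|\nabla f|\,)_{B(\cdot,\epsilon)}$ outside a set of small capacity — so one should define $U_k$ via this maximal function of $\nabla f$ (and, to get the full Lipschitz and not merely Hölder conclusion in the $W^2$ setting, also of $|\nabla(\nabla f)|$, i.e.\ the second derivatives), ensuring $\fintop_{B(z,r)}|\nabla f|\,dy\le M_k$ for \emph{every} ball centered at a point of $\Omega_j\setminus U_k$. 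With that, the telescoping sum over a chain whose balls are all centered in $\Omega_j\setminus U_k$ is dominated by $\sum_m r_m M_k\lesssim M_k|x-y|$, giving $|f^*(x)-f^*(y)|\le C M_k|x-y|$; since this holds for $x,y$ outside a $\cp_p$-null subset of $C_k'$, the restriction $f^*|_{C_k'}$ is Lipschitz $p$-quasieverywhere, and it extends to a genuine Lipschitz function on $C_k'$.

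Finally I would check $\cp_p\bigl(\Omega_j\setminus\bigcup_k C_k'\bigr)=0$: since $\Omega_j\setminus C_k'\subset U_k$ with $\cp_p(U_k)<2^{-k}$, the intersection $\bigcap_k U_k$ has $\cp_p$ bounded by $\inf_k 2^{-k}=0$ by monotonicity, hence $\cp_p$-measure zero. Relabelling the doubly-indexed family $\{C_k'^{(j)}\}$ as an increasing sequence $\{C_k\}$ (e.g.\ $C_k=\bigcup_{j\le k} C_{k}'^{(j)}$, which is closed in $\Omega$ and contained in $C_{k+1}$) and using countable subadditivity of $\cp_p$ over $j$ gives $\cp_p\bigl(\Omega\setminus\bigcup_k C_k\bigr)=0$, completing the proof.
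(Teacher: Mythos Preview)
Your core strategy coincides with the paper's: control the Hardy--Littlewood maximal function of $|\nabla f|$ via the Chebyshev-type capacity inequality (which costs $\|\nabla^2 f\|_{L_p}^p/\alpha^p$), then run the Poincar\'e-plus-telescoping argument with balls centred only at the two endpoints $x,y$ to obtain the Lipschitz bound on the sublevel set $R_\alpha$ minus a $\cp_p$-null set. Two remarks.

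First, the parenthetical about also controlling the maximal function of $|\nabla(\nabla f)|$ is unnecessary: only $\sup_{r>0} |\nabla f|_{B(\cdot,r)}\le\alpha$ is used in the chaining, and the second derivatives enter solely through the right-hand side of the Chebyshev inequality applied to $|\nabla f|\in W^1_p$.

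Second, and more substantively, your gluing step has a gap. The sets $C_k=\bigcup_{j\le k} C_k'^{(j)}$ are increasing and closed, but $f^*$ need not be Lipschitz on the \emph{union}: the estimate on each piece $C_k'^{(j)}$ gives no control of $|f^*(x)-f^*(y)|$ when $x\in C_k'^{(j_1)}$ and $y\in C_k'^{(j_2)}$ lie in different pieces, since the chaining argument requires the maximal-function bound at both endpoints relative to the \emph{same} global extension. (Also note the Chebyshev inequality is stated on $\R^n$, so working directly on $\Omega_j$ without a cutoff is not quite licit.) The paper resolves this by multiplying $f$ by cutoffs $\zeta_k\in C_c^\infty(\Omega_{k+1})$ to get $f_k\in W^2_p(\R^n)$ with associated sublevel sets $A^k_\alpha$, choosing levels $\alpha_k$ so that $\cp_p(\R^n\setminus A^k_{\alpha_k})\le C\,2^{-k}$, and then setting $B_l:=\bigcap_{k\ge l}A^k_{\alpha_k}$ and $C_l:=B_l\cap\overline{\Omega}_l$. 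Each $C_l$ then sits inside a \emph{single} set $A^l_{\alpha_l}\cap\overline{\Omega}_l$ on which $f_l^*=f^*$ is Lipschitz, while the Borel--Cantelli computation $\cp_p\bigl(\limsup_l(\R^n\setminus A^l_{\alpha_l})\bigr)=0$ yields $\cp_p(\Omega\setminus\bigcup_l C_l)=0$. With this localisation carried out correctly, your outline is the paper's proof.
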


\begin{proof}
Assume first 
$f\in W^2_p(\R^n).$
Then for any 
$\alpha>0$
we define a set 
\begin{equation}
R_\alpha=\Set{x\in \R^n} [\sup_{r>0}|\nabla f|_{B(x,r)}\leq \alpha].
\end{equation}
The set $R_\alpha$
is closed: fix an arbitrary number 
$\varepsilon>0$
and let 
$\{x_i\}_{i=1}^\infty \subset R_\alpha$
be a sequence of points such that $x_i\to x$
as 
$i\to \infty.$ 
Since for every $r>0$ the function $z\longmapsto|\nabla f|_{B(z,r)}$
is continuous, there exists a number $r_0>0$ 
such that 
$$
\left||\nabla f|_{B(z,r)}-|\nabla f|_{B(x,r)}\right|\leq \varepsilon,\quad \forall z\in B(x,r_0).
$$
Thus, for sufficiency large numbers $i$ such that $x_i\in B(x,r_0)$ we have 
$$
|\nabla f|_{B(x,r)}\leq \varepsilon+|\nabla f|_{B(x_i,r)}\leq \varepsilon+\alpha.
$$
Hence $\sup_{r>0}|\nabla f|_{B(x,r)}\leq \varepsilon+\alpha$ and since $\varepsilon>0$ is arbitrary we get $x\in R_\alpha$ and so $R_\alpha$ is a closed set. 
Note, in addition, that by the definition of the sets $R_{\alpha}$, $\alpha>0$,
we have 
$R_{\alpha_1}\subset R_{\alpha_2}$ for numbers 
$\alpha_2\geq\alpha_1>0$.
Now, let 
$x\in R_\alpha$. Since $|\nabla f|\in L_p(\mathbb R^n)$, then by the local Poincar\'e inequality we have
\begin{equation}
\fintop_{B(x,r)}|f(y)-f_{B(x,r)}|dy\leq C(n,p)r\fintop_{B(x,r)}|\nabla f(y)|dy\leq Cr\alpha. 
\end{equation}
Hence
\begin{multline}
|f_{B(x,r/2^{k+1})}-f_{B(x,r/2^k)}|
\leq \fintop_{B(x,r/2^{k+1})}|f(y)-f_{B(x,r/2^k)}|dy\\
\leq 2^n\fintop_{B(x,r/2^k)}|f(y)-f_{B(x,r/2^k)}|dy
\leq \frac{Cr\alpha}{2^k},
\end{multline}
where
$C$
is a constant which is dependent on $n$ and $p$ only.   
Since 
$f\in W^2_{p}(\R^n)$, then $f\in W^1_{p}(\R^n)$ and \cite{HM72,M} there exists a Borel set $E\subset \R^n$ such that $\cp_{p}(E)=0$ and 
\begin{equation}
\label{eq:Cap_p(E)=0 and there is a convergence of the averages of f to f^*}
\lim_{r\to 0^+}f_{B(x,r)}=f^*(x) \quad \text{for all} \quad x\in \R^n\setminus E.
\end{equation}
Therefore, for every $x\in R_\alpha\setminus E$ it follows that 
\begin{equation}
\label{eq:estimate for |f^*(x)-f_B(x,r)|}
|f^*(x)-f_{B(x,r)}|=\left|\sum_{k=0}^\infty \left[f_{B(x,r/2^{k+1})}-f_{B(x,r/2^k)}\right]\right|\leq Cr\alpha. 
\end{equation}
Now, we take arbitrary points $x,y\in R_\alpha$ such that $x\neq y$ and we set $r=|x-y|$. Then
\begin{multline}
\label{eq:estimate for the averages (f)_x,r and (f)_y,r}
|f_{B(x,r)}-f_{B(y,r)}|\leq \fintop_{B(x,r)\cap B(y,r)} \left(|f_{B(x,r)}-f(z)|+|f(z)-f_{B(y,r)}|\right)dz\\
\leq 2^n\left(\fintop_{B(x,r)}|f_{B(x,r)}-f(z)|dz+
\fintop_{B(y,r)}|f(z)-f_{B(y,r)}|dz \right)
\leq Cr\alpha.
\end{multline}
By the results 
$\eqref{eq:estimate for |f^*(x)-f_B(x,r)|}$ 
and 
$\eqref{eq:estimate for the averages (f)_x,r and (f)_y,r}$ 
and the triangle inequality we obtain 
\begin{equation}
\label{eq:f^* is Lipschitz}
|f^*(x)-f^*(y)|\leq C\alpha|x-y|, \quad \text{for any}\quad x,y\in R_\alpha \setminus E. 
\end{equation}
In particular, for every integer
$k\geq1$
the function
$f^*$
is Lipschitz continuous on 
$R_k\setminus E,\cp_p(E)=0$.
Now, since $f\in W^2_{p}(\R^n)$, then 
$\nabla f\in W^1_{p}(\R^n;\R^n)$
and by the Chebyshev type inequality for capacity \cite{evans2015measure,K21} we have the following  estimate  
\begin{equation}
\label{eq:estimate for the capacity by the derivative}
\cp_{p}(\R^n\setminus R_\alpha)\leq \frac{C(n,p)}{\alpha^p}
\nnorm[{\nabla}^2 f \mid L_p(\R^n)]^p,\quad \text{for any number}\quad \alpha>0.
\end{equation}
Therefore, by 
$\eqref{eq:estimate for the capacity by the derivative}$
\begin{equation}
\cp_{p} (\R^n\setminus\cup_{k=1}^\infty R_k)=0.
\end{equation}
It completes the proof in the case $f\in W^2_p(\R^n).$

Now, assume 
$f\in W^2_{p,\loc}(\Omega)$.
Let 
$\Omega_k\subset \Omega_{k+1}\subset \Omega$
be a nested sequence for 
$\Omega$,
i.e., 
$\forall k,$ $\Omega_k$
is an open set such that 
$\overline{\Omega}_k\subset \Omega_{k+1}$,
$\overline{\Omega}_k$
is compact and the union of the sets 
$\Omega_k$
equals to 
$\Omega$.

For each $k$ we choose a function 
$\zeta_k\in C^\infty _c(\Omega_{k+1})$ 
such that
$\zeta_k\equiv 1$
on
$\overline{\Omega}_k$.

It follows that 
$f_k:=f\zeta_k\in W_p^2(\Omega_{k+1}),\Supp(f_k)\subset \Omega_{k+1}$,
where 
$\Supp(f_k)$
denotes the support of 
$f_k$.
We extend 
$f_k$
from
$\Omega_{k+1}$
to
$\R^n$
by zero and denote the extension again by
$f_k$.
Then 
$f_k\in W^2_p(\R^n)$
and we get by the previous case a non decreasing sequence of closed sets
$\{A_l^k\}_{l=1}^\infty$
such that
$f_k^*|_{A_l^k}$
is Lipschitz continuous in $A_l^k$ up to a set of $p$-capacity zero and
$\cp_{p} (\R^n\setminus\cup_{l=1}^\infty A_l^k)=0$.
Let us choose a sequence of numbers $\alpha_k\in \N$ 
such that 
\begin{equation}
\alpha_k^p\geq 2^k\nnorm[{\nabla}^2 f_k \mid L_p(\R^n)]^p.
\end{equation}

Using the Chebyshev type inequality for capacity we have

\begin{equation}
\cp_{p}(\R^n\setminus A^k_{\alpha_k})\leq \frac{C(n,p)}{\alpha_k^p}
\nnorm[{\nabla}^2 f_k \mid L_p(\R^n)]^p\leq \frac{C(n,p)}{2^k}.
\end{equation} 

For a sequence 
$\{\alpha_k\}_{k=1}^\infty$ 
as above let us define a sequence of sets

\begin{equation}
\{B_l\}_{l=1}^\infty,\quad B_l=\bigcap_{k=l}^\infty A_{\alpha_k}^k.
\end{equation}
It follows that
\begin{equation}
\R^n\setminus \bigcup_{l=1}^\infty B_l=\R^n\setminus \liminf_{l\to \infty}A_{\alpha_l}^l=\limsup_{l\to \infty}\left(\R^n \setminus A_{\alpha_l}^l\right)=\bigcap_{j=1}^\infty \bigcup_{k=j}^\infty \left(\R^n \setminus A_{\alpha_k}^k\right).
\end{equation}
Therefore, for any $j\geq 1$ we obtain
\begin{equation}
\cp_p\left(\R^n\setminus \bigcup_{l=1}^\infty B_l\right)\leq
\sum_{k=j}^\infty\cp_p\left(\R^n \setminus A_{\alpha_k}^k\right)\leq C(n,p)\sum_{k=j}^\infty \frac{1}{2^k}.
\end{equation}
Thus,
\begin{equation}
\cp_p\left(\R^n\setminus \bigcup_{l=1}^\infty B_l\right)=0.
\end{equation}
Now, define a sequence of sets

\begin{equation}
\{C_l\}_{l=1}^\infty,\quad C_l=B_l\cap \overline{\Omega}_l.
\end{equation}
The sequence $\{C_l\}_{l=1}^\infty$ is monotone increasing since 
$\{B_l\}_{l=1}^\infty,
\{\overline{\Omega}_l\}_{l=1}^\infty$ are monotone increasing. $C_l$ is closed as an intersection of closed sets $B_l,\overline{\Omega}_l$.
Since  
\begin{equation}
\Omega\setminus \bigcup_{l=1}^\infty C_l
=\Omega\setminus \bigcup_{l=1}^\infty B_l\cap \overline{\Omega}_l 
=\Omega\setminus \bigcup_{l=1}^\infty B_l\cap\Omega
=\Omega\cap \left(\R^n\setminus \bigcup_{l=1}^\infty B_l\right)
\subset 
\R^n\setminus \bigcup_{l=1}^\infty B_l,
\end{equation}
then
\begin{equation}
\cp_p\left(\Omega\setminus \bigcup_{l=1}^\infty C_l\right)=0.
\end{equation}
At last, since 
$C_l\subset A^l_{\alpha_l}\cap \overline{\Omega
}_l$ and 
$f_l^*\vert_{A_{\alpha_l}^l\cap\overline{\Omega}_l}=f^*\vert _{A^l_{\alpha_l}\cap\overline{\Omega}_l}$
is Lipschitz continuous in $A^l_{\alpha_l}\cap \overline{\Omega}_l$ up to a set of $p$-capacity zero, then $f^*\vert_{C_l}$ is a Lipschitz continuous function in $C_l$ up to a set of $p$-capacity zero.
It completes the proof of the theorem. 
\end{proof}

We denote by 
${\Haus}^s,s\geq 0$, the $s$-dimensional Hausdorff measure. For Borel sets 
$B\subset \R^n$ 
such that
$\Haus^{n-1}(B)<\infty$
it follows that
\begin{equation}
\label{eq:mutual absolute continuity of cap_1 and Hausdorff measure}
\Haus^{n-1}(B)=0\quad \Longleftrightarrow\quad \cp_{1}(B)=0.
\end{equation} 
For a proof of this relation between the (outer) measures 
$\Haus^{n-1},\cp_{1}$ 
see for example Theorem 3 in Section 5.6 in the book \cite{evans2015measure}.

The following corollary is an immediate consequence of Theorem 
$\ref{lem:LipCap}$
and 
$\eqref{eq:mutual absolute continuity of cap_1 and Hausdorff measure}.$
\begin{cor}
\label{lem:LipHaus}
Let 
$\Omega\subset \R^n$
be an open set and 
$f\in W^2_{1,\loc}(\Omega)$.
Then there exists a sequence of closed sets 
$\{C_k\}_{k=1}^{\infty}$
such that for every
$k=1,2,...$,
$C_k\subset C_{k+1}\subset\Omega$,
the restriction 
$f^* \vert_{C_k}$ 
is a Lipschitz continuous function defined $1$-quasieverywhere in
$C_k$
and 
\begin{equation}
\Haus^{n-1}\left(\Omega\setminus\bigcup_{k=1}^{\infty}C_k\right)=0.
\end{equation}
\end{cor}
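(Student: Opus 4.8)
The plan is to apply Theorem~\ref{lem:LipCap} with the exponent $p=1$ and then convert the conclusion about $1$-capacity into a conclusion about $(n-1)$-dimensional Hausdorff measure by means of~\eqref{eq:mutual absolute continuity of cap_1 and Hausdorff measure}. Applied to $f\in W^2_{1,\loc}(\Omega)$, Theorem~\ref{lem:LipCap} furnishes an increasing sequence of closed sets $C_k\subset C_{k+1}\subset\Omega$ such that each $f^*\vert_{C_k}$ is Lipschitz continuous $1$-quasieverywhere in $C_k$ and $\cp_1\bigl(\Omega\setminus\bigcup_{k=1}^\infty C_k\bigr)=0$. These are exactly the sets claimed by the corollary, and the statement about $f^*\vert_{C_k}$ is inherited verbatim (it is the $p=1$ case of Theorem~\ref{lem:LipCap}); hence the corollary reduces entirely to proving the implication $\cp_1(S)=0\Rightarrow\Haus^{n-1}(S)=0$ for $S:=\Omega\setminus\bigcup_{k=1}^\infty C_k$.

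For this step I would first observe that $S$ is Borel, being the difference of the open set $\Omega$ and the $F_\sigma$ set $\bigcup_{k=1}^\infty C_k$. By the outer regularity of the capacity (see~\eqref{eq:outer regularity of capacity}), $\cp_1(S)=0$ produces, for every $j\in\N$, an open set $U_j\supset S$ with $\cp_1(U_j)<1/j$. Denoting by $\Haus^{n-1}_{\infty}$ the $(n-1)$-dimensional Hausdorff content, one has the boxing-type comparison $\Haus^{n-1}_{\infty}(E)\le C(n)\,\cp_1(E)$ valid for every $E\subset\R^n$ (a standard companion of~\eqref{eq:mutual absolute continuity of cap_1 and Hausdorff measure}, obtained by testing $\cp_1$ against admissible nonnegative $g\in C^\infty_c$ and using $\Haus^{n-1}_{\infty}(\{g>t\})\le C(n)t^{-1}\|\nabla g\|_{L_1(\R^n)}$). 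Consequently $\Haus^{n-1}_{\infty}(S)\le\Haus^{n-1}_{\infty}(U_j)\le C(n)\,\cp_1(U_j)\to0$, so $\Haus^{n-1}_{\infty}(S)=0$; and because a cover witnessing a small $(n-1)$-content is automatically of small mesh, $\Haus^{n-1}_{\infty}(S)=0$ forces $\Haus^{n-1}(S)=0$. This completes the argument.

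The one genuinely nonformal point — and, I expect, the only real obstacle — is precisely this last passage $\cp_1(S)=0\Rightarrow\Haus^{n-1}(S)=0$: the equivalence~\eqref{eq:mutual absolute continuity of cap_1 and Hausdorff measure} as stated requires the Borel set in question to have finite $\Haus^{n-1}$-measure, which we do not know a priori for $S$. Working with the Hausdorff content $\Haus^{n-1}_{\infty}$ as above bypasses this. If instead one wishes to use~\eqref{eq:mutual absolute continuity of cap_1 and Hausdorff measure} directly, one must first reduce $S$ to finite- (or $\sigma$-finite-) $\Haus^{n-1}$-measure pieces, for instance by intersecting with the compact members $\overline{\Omega}_k$ of a nested exhaustion of $\Omega$ and arguing there, then summing by $\sigma$-subadditivity of $\Haus^{n-1}$. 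Apart from this finiteness bookkeeping, the proof is a direct transcription of Theorem~\ref{lem:LipCap}.
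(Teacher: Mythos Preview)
Your proof is correct and takes exactly the paper's route: apply Theorem~\ref{lem:LipCap} with $p=1$ and then convert $\cp_1(S)=0$ into $\Haus^{n-1}(S)=0$ via~\eqref{eq:mutual absolute continuity of cap_1 and Hausdorff measure}. You are right that~\eqref{eq:mutual absolute continuity of cap_1 and Hausdorff measure} as stated carries a finiteness hypothesis that the paper tacitly ignores; your Hausdorff-content argument is a legitimate way to close this (and indeed the implication $\cp_1=0\Rightarrow\Haus^{n-1}=0$ holds for arbitrary sets, which is what the paper is implicitly using). One caveat on your aside: intersecting $S$ with compact sets $\overline{\Omega}_k$ does not by itself yield pieces of finite $\Haus^{n-1}$-measure, so that alternative would not work as written.
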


Now we define the notion of Lipschitz $p$-quasicontinuous functions.

\begin{dfn}
\label{def:Lipschitz quasicontinuity}
Let 
$\Omega\subset \R^n$ 
be an open set and let 
$f:\Omega\to \R$ 
be a function. The function $f$ is called
\textbf{$p$-Lipschitz quasicontinuous} 
if for each 
$\varepsilon>0$ 
there exists an open set 
$V\subset \Omega$ 
such that
\begin{equation}
\cp_p(V)\leq \varepsilon
\end{equation} 
and
\begin{equation}
f\vert_{\Omega\setminus V\quad} \text{is Lipschitz continuous}.
\end{equation}
\end{dfn}

The following corollary is an immediate consequence of the previous theorem.

\begin{cor}
Let $f\in W^2_p(\R^n),1\leq p<\infty.$ Then $f^*$ is $p-$Lipschitz quasicontinuous. 
\end{cor}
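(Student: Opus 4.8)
The plan is to read off everything needed directly from the first part of the proof of Theorem~\ref{lem:LipCap}, where the case $f\in W^2_p(\R^n)$ was treated. There, for each $\alpha>0$, we have the closed set
\[
R_\alpha=\Set{x\in\R^n}[\sup_{r>0}|\nabla f|_{B(x,r)}\leq\alpha],
\]
together with two facts. First, by \eqref{eq:f^* is Lipschitz}, there is a Borel set $E$ with $\cp_p(E)=0$ such that $|f^*(x)-f^*(y)|\leq C\alpha|x-y|$ for all $x,y\in R_\alpha\setminus E$. Second, by the Chebyshev type inequality for capacity, \eqref{eq:estimate for the capacity by the derivative} gives $\cp_p(\R^n\setminus R_\alpha)\leq C(n,p)\alpha^{-p}\nnorm[{\nabla}^2 f\mid L_p(\R^n)]^p$.

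Given $\varepsilon>0$, I would first choose $\alpha>0$ so large that $C(n,p)\alpha^{-p}\nnorm[{\nabla}^2 f\mid L_p(\R^n)]^p<\varepsilon/2$, so that $\cp_p(\R^n\setminus R_\alpha)<\varepsilon/2$. Next, since $\cp_p(E)=0$, the outer regularity of the $p$-capacity, \eqref{eq:outer regularity of capacity}, provides an open set $U\supset E$ with $\cp_p(U)<\varepsilon/2$. Then I set $V:=(\R^n\setminus R_\alpha)\cup U$; it is open because $R_\alpha$ is closed and $U$ is open, and by the $\sigma$-subadditivity of capacity $\cp_p(V)\leq\cp_p(\R^n\setminus R_\alpha)+\cp_p(U)<\varepsilon$.

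Finally, $\R^n\setminus V=R_\alpha\setminus U\subset R_\alpha\setminus E$, so by \eqref{eq:f^* is Lipschitz} the restriction $f^*\vert_{\R^n\setminus V}$ satisfies $|f^*(x)-f^*(y)|\leq C\alpha|x-y|$ and is therefore genuinely Lipschitz continuous on $\R^n\setminus V$. Since $\varepsilon>0$ was arbitrary, $f^*$ is $p$-Lipschitz quasicontinuous in the sense of Definition~\ref{def:Lipschitz quasicontinuity}. There is no genuine obstacle here; the only point that requires a moment's care is that, in order to upgrade the ``Lipschitz $p$-quasieverywhere'' conclusion of Theorem~\ref{lem:LipCap} to honest Lipschitz continuity on $\R^n\setminus V$, one must spend part of the capacity budget on an open neighbourhood $U$ of the exceptional null set $E$ --- which is precisely why $V$ is taken to be the union of $\R^n\setminus R_\alpha$ with such a $U$ rather than just $\R^n\setminus R_\alpha$ itself.
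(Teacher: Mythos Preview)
Your proof is correct and follows essentially the same route as the paper: choose $\alpha$ large so that $\cp_p(\R^n\setminus R_\alpha)$ is small, absorb the exceptional null set $E$ into an open set of small capacity via outer regularity, and observe that $f^*$ is genuinely Lipschitz on the complement. The only cosmetic difference is that the paper applies outer regularity in one step to the combined set $\R^n\setminus(R_\alpha\setminus E)=(\R^n\setminus R_\alpha)\cup E$, whereas you split this union and handle the already-open piece $\R^n\setminus R_\alpha$ separately from an open cover of $E$; the two arguments are equivalent.
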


\begin{proof}
Fix 
$\varepsilon>0.$ 
By 
$\eqref{eq:estimate for the capacity by the derivative}$ 
there exists a big enough 
$\alpha>0$ 
such that
$\cp_{p}\left(\R^n\setminus (R_\alpha\setminus E)\right)\leq \frac{\varepsilon}{2}$ 
and 
$f^*$ 
is Lipschitz continuous in 
$R_\alpha\setminus E$, where 
$R_\alpha$ and $E$ 
are the same as in the proof of Theorem 
$\ref{lem:LipCap}$.
By the outer regularity of $p$-capacity
$\eqref{eq:outer regularity of capacity}$
there exists an open set 
$V$ 
such that 
$\R^n\setminus (R_\alpha\setminus E) \subset V$, $\cp_p(V)\leq \varepsilon$ 
and 
$f^*$ 
is Lipschitz continuous in 
$\R^n\setminus V$.
\end{proof}

\subsection{The refined Luzin type theorem for functions of second order Sobolev spaces} 
  
\begin{thm}
\label{th:Luzin's thorem for capacity}
Let 
$\Omega\subset \R^n$ 
be an open set and 
$f\in W_{1,\loc}^2(\Omega).$ 
Let 
$B\subset \Omega$ 
be a Borel set such that 
$\Haus^{n-1}(B)<\infty.$ 
Then for any real number 
$\varepsilon>0$ 
there exists a Borel set 
$B_{\varepsilon}\subset B$ 
such that 
$f^*\vert_{B_{\varepsilon}}$ 
is Lipschitz continuous 
in 
$B_\varepsilon$ up to a set of $1$-capacity zero
and 
$\Haus^{n-1}(B\setminus B_{\varepsilon})<\varepsilon.$  
\end{thm}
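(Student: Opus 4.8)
The plan is to derive this Luzin-type theorem directly from Corollary~\ref{lem:LipHaus}, exploiting the mutual absolute continuity \eqref{eq:mutual absolute continuity of cap_1 and Hausdorff measure} between $\Haus^{n-1}$ and $\cp_1$ on Borel sets of finite $(n-1)$-measure. First I would apply Corollary~\ref{lem:LipHaus} to the function $f\in W^2_{1,\loc}(\Omega)$ to obtain an increasing sequence of closed sets $\{C_k\}_{k=1}^\infty$, $C_k\subset C_{k+1}\subset\Omega$, such that $f^*\vert_{C_k}$ is Lipschitz continuous $1$-quasieverywhere in $C_k$ and $\Haus^{n-1}\bigl(\Omega\setminus\bigcup_{k=1}^\infty C_k\bigr)=0$. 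Intersecting with the given Borel set $B$, set $B_k:=B\cap C_k$; then $\{B_k\}$ is an increasing sequence of Borel sets, each restriction $f^*\vert_{B_k}$ inherits Lipschitz continuity $1$-quasieverywhere in $B_k$, and $B\setminus\bigcup_{k=1}^\infty B_k\subset\Omega\setminus\bigcup_{k=1}^\infty C_k$ has $\Haus^{n-1}$-measure zero.

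The next step is a continuity-of-measure argument. Since $\Haus^{n-1}(B)<\infty$ and the sets $B\setminus B_k$ decrease to $B\setminus\bigcup_k B_k$, which has $\Haus^{n-1}$-measure zero, continuity from above of the finite measure $\Haus^{n-1}$ restricted to $B$ gives $\lim_{k\to\infty}\Haus^{n-1}(B\setminus B_k)=0$. Hence, given $\varepsilon>0$, I can choose $k$ large enough that $\Haus^{n-1}(B\setminus B_k)<\varepsilon$, and take $B_\varepsilon:=B_k$. This $B_\varepsilon$ is a Borel subset of $B$, $f^*\vert_{B_\varepsilon}$ is Lipschitz continuous in $B_\varepsilon$ up to a set of $1$-capacity zero, and $\Haus^{n-1}(B\setminus B_\varepsilon)<\varepsilon$, which is exactly the claim.

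I expect the only genuinely delicate point to be the justification that $\Haus^{n-1}$ behaves as a genuine (countably additive) measure on the Borel subsets of $B$ so that continuity from above applies — this rests on $\Haus^{n-1}$ being a Borel regular outer measure together with the finiteness hypothesis $\Haus^{n-1}(B)<\infty$, which makes the decreasing limit valid. One should also be slightly careful that "Lipschitz continuous $1$-quasieverywhere in $B_k$" is preserved under passing to the subset $B_\varepsilon=B_k$ and is stated consistently with \eqref{eq:mutual absolute continuity of cap_1 and Hausdorff measure}: the exceptional set in $C_k$ has $\cp_1$ zero, hence $\Haus^{n-1}$ zero once we know it is Borel and of finite measure, but since the statement only asserts Lipschitz continuity "up to a set of $1$-capacity zero" no further work is needed there. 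Everything else is bookkeeping with nested families, so no additional lemmas beyond Corollary~\ref{lem:LipHaus} and \eqref{eq:mutual absolute continuity of cap_1 and Hausdorff measure} are required.
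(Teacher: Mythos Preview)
Your argument is correct and follows essentially the same route as the paper: obtain the increasing family of closed sets on which $f^*$ is Lipschitz up to a $\cp_1$-null set (the paper invokes Theorem~\ref{lem:LipCap} with $p=1$ rather than Corollary~\ref{lem:LipHaus}, but this is only cosmetic), pass from $\cp_1$-null to $\Haus^{n-1}$-null via \eqref{eq:mutual absolute continuity of cap_1 and Hausdorff measure}, and then use continuity from above of $\Haus^{n-1}$ on subsets of $B$ to select $B_\varepsilon$. Your identification of the one genuine point requiring care --- the use of $\Haus^{n-1}(B)<\infty$ to justify continuity from above --- matches the paper's reasoning exactly.
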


\begin{proof}
By Theorem 
$\ref{lem:LipCap}$,
since 
$f\in W_{1,\loc}^2(\Omega)$, 
there exists a  sequence of closed sets 
$B_k\subset B_{k+1}\subset\Omega$ 
such that 
$f^*\vert_{B_k}$ 
is Lipschitz continuous
in 
$B_k$ up to a set of $1$-capacity zero
and
$\cp_{1}\left(\Omega\setminus \bigcup_{k=1}^\infty B_k\right)=0$. Since  
\begin{equation}
\cp_{1}\left(B\setminus \bigcup_{k=1}^\infty B_k\right)=0,
\end{equation}
then
\begin{equation}
\Haus^{n-1}\left(B\setminus \bigcup_{k=1}^\infty B_k\right)=0.
\end{equation}
By the assumption that 
$\Haus^{n-1}(B)<\infty$, we have
 
\begin{equation}
\lim_{l\to \infty}\Haus^{n-1}\left(B\setminus \bigcup_{k=1}^l B_k\right)=0.
\end{equation}

Choose a big enough natural number $l$ such that $\Haus^{n-1}\left(B\setminus \bigcup_{k=1}^l B_k\right)<\varepsilon$ and set $B_{\varepsilon}:=\bigcup_{k=1}^l B_k\cap B.$ It follows that $B_{\varepsilon}\subset B$ is a Borel set for which
$f^*\vert_{B_{\varepsilon}}$ 
is Lipschitz continuous
in $B_\varepsilon$ up to a set of $1$-capacity zero
and 
$\Haus^{n-1}(B\setminus B_{\varepsilon})<\varepsilon.$ 
\end{proof}

\begin{rem}
If we assume in Theorem
$\ref{th:Luzin's thorem for capacity}$
that the set 
$B$
is closed, then we see from the proof of this theorem that we can also choose the set 
$B_\varepsilon$
to be closed. 
\end{rem}

Before we introduce another capacity version of Luzin's theorem we recall that for a monotone decreasing sequence of compact sets 
$C_{k+1}\subset C_k\subset \R^n$ 
we have 
$$
\lim\limits_{k\to \infty}\cp_{p}(C_k)=\cp_{p}\left(\bigcap_{k=1}^\infty C_k\right).
$$
The proof of this formula can be found, for example, in \cite{evans2015measure}. We denote by $Int(B)$ the topological interior of a set $B.$     

\begin{thm}
\label{thm:Luzin's theorem for union of compact sets}
Let 
$\Omega\subset \R^n$ 
be an open set and 
$f:\Omega\to\mathbb R$ 
be a function. Assume the existence of a sequence of closed sets 
$\{B_k\}_{k=1}^{\infty}$,
$B_k\subset B_{k+1}\subset\Omega$
for which the restrictions 
$f \vert_{B_k}$ 
are Lipschitz continuous functions on the sets $B_k$ 
and 
$$
\cp_{p}\left(\Omega\setminus\bigcup_{k=1}^{\infty}B_k\right)=0.
$$
Let  A be a compact set such that 
$A\subset\bigcup_{k=1}^{\infty}Int(B_k).$ 
Then for every 
$\varepsilon>0$ 
there exists a Borel set 
$A_{\varepsilon}\subset A$ 
such that 
$f\vert_{A_\varepsilon}$ 
is Lipschitz continuous and 
$\cp_{p}(A\setminus A_{\varepsilon})\leq \varepsilon.$ 
\end{thm}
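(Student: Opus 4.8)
The plan is to use the compactness of $A$ to collapse the exhausting sequence $\{B_k\}_{k=1}^\infty$ down to a single member $B_N$, after which the conclusion is immediate and one may in fact take $A_\varepsilon=A$.

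Step one: observe that since $B_k\subset B_{k+1}$, monotonicity of the topological interior gives $\Int(B_k)\subset\Int(B_{k+1})$, so $\{\Int(B_k)\}_{k=1}^\infty$ is an increasing sequence of open sets whose union contains the compact set $A$. Extracting a finite subcover and using the nestedness, there is a single index $N$ with $A\subset\Int(B_N)\subset B_N$.

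Step two: by hypothesis $f\vert_{B_N}$ is Lipschitz continuous on $B_N$, say with constant $L$; since $A\subset B_N$, the estimate $|f(x)-f(y)|\le L|x-y|$ holds for all $x,y\in A$, so $f\vert_A$ is Lipschitz continuous (a restriction of a Lipschitz map to a subset is Lipschitz with the same constant). Step three: put $A_\varepsilon:=A$. This is a compact, hence Borel, subset of $A$, on which $f$ is Lipschitz continuous, and $A\setminus A_\varepsilon=\emptyset$, so $\cp_p(A\setminus A_\varepsilon)=\cp_p(\emptyset)=0\le\varepsilon$ for every $\varepsilon>0$.

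The only step that requires any care is the passage from the infinite cover to the single index $N$, which relies both on the compactness of $A$ and on the fact that we are covering $A$ by the \emph{interiors} of the $B_k$ (not merely the $B_k$ themselves); I do not expect this to be a genuine obstacle. It is worth noting that if one weakened the hypothesis to $A\subset\bigcup_k B_k$ — where a compact set in an increasing union of closed sets need not lie in any single one — then the reduction to a single $B_N$ fails, and one would instead have to exhaust $A$ by the compact sets $A\setminus\Int(B_k)$ and invoke the continuity of $\cp_p$ along decreasing sequences of compact sets together with the capacitory smallness of $\Omega\setminus\bigcup_k B_k$; under the hypotheses as stated, however, none of this is needed.
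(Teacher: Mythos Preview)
Your proof is correct and in fact simpler than the paper's. Both arguments exploit the assumption $A\subset\bigcup_k\Int(B_k)$, but in different ways. The paper considers the decreasing sequence of compact sets $A\setminus\bigcup_{k=1}^l\Int(B_k)$ and invokes the continuity of $\cp_p$ along decreasing compact families (the fact recalled just before the theorem) to conclude that the capacity of these sets tends to~$0$; it then picks $l_0$ large enough and sets $A_\varepsilon=A\cap\bigcup_{k=1}^{l_0}\Int(B_k)$. You observe more directly that compactness of $A$ together with the nested open cover $\{\Int(B_k)\}$ forces $A\subset\Int(B_N)$ for a single $N$, so that the paper's sequence is actually \emph{empty} from some index on and one may simply take $A_\varepsilon=A$. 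Thus you obtain a sharper conclusion (the exceptional set is empty, not merely of small capacity) with less machinery. Your final remark is also on point: the paper's argument via continuity of capacity on decreasing compacta is exactly the route one would need under the weaker hypothesis $A\subset\bigcup_k B_k$, and the paper's proof is written as if with that situation in mind, even though under the stated hypothesis your shortcut applies.
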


\begin{proof}
By assumption there exists a monotone increasing sequence of closed sets 
$B_k$ 
such that 
$f\vert_{B_k}$ 
is Lipschitz continuous for any integer 
$k\geq 1$ 
and 
$\cp_{p}\left(\Omega\setminus \bigcup_{k=1}^\infty B_k\right)=0.$
Thus,
\begin{equation}
0=\cp_{p}\left(A\setminus \bigcup_{k=1}^\infty Int(B_k)\right)=\lim_{l\to \infty}\cp_{p}\left(A\setminus \bigcup_{k=1}^l Int(B_k)\right).
\end{equation}
Fix 
$\varepsilon>0.$ 
There exists a big enough integer $l_0$ such that
\begin{equation}
\cp_{p}\left(A\setminus \bigcup_{k=1}^l Int(B_k)\right)\leq\varepsilon,\quad \forall l\geq l_0. 
\end{equation}
Set $A_{\varepsilon}:=\bigcup_{k=1}^{l_0} Int(B_k)\cap A.$ It follows that $A_{\varepsilon}\subset A$ is a Borel set such that $f\vert_{A_{\varepsilon}}$ is Lipschitz continuous and $\cp_{p}\left(A\setminus A_{\varepsilon}\right)\leq \varepsilon.$      
\end{proof}
The following Corollary is an immediate consequence of Theorem 
$\ref{lem:LipCap}$ 
and Theorem 
$\ref{thm:Luzin's theorem for union of compact sets}.$  

\begin{cor}
\label{cor:Luzin property for capacity}
Let 
$\Omega\subset \R^n$ 
be an open set and 
$f\in W^2_{p,\loc}(\Omega),1\leq p<\infty.$
There exists a sequence of closed sets 
$\{B_k\}_{k=1}^{\infty}$, 
$B_k\subset B_{k+1}\subset\Omega$ 
for which the restrictions 
$f^* \vert_{B_k}$ 
are Lipschitz continuous
in 
$B_k$ up to a set of $p$-capacity zero
and 
$$
\cp_{p}\left(\Omega\setminus\bigcup_{k=1}^{\infty}B_k\right)=0.
$$
Moreover, if A is a compact set such that 
$A\subset\bigcup_{k=1}^{\infty}Int(B_k)$, 
then for every 
$\varepsilon>0$ 
there exists a Borel set 
$A_{\varepsilon}\subset A$ 
such that 
$f^*\vert_{A_\varepsilon}$ 
is Lipschitz continuous
in 
$A_\varepsilon$ up to a set of $p$-capacity zero
and 
$\cp_{p}(A\setminus A_{\varepsilon})\leq \varepsilon.$ 
\end{cor}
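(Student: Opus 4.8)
The plan is to deduce this directly from the two preceding theorems, so the only real work is bookkeeping. First I would apply Theorem~\ref{lem:LipCap} to $f\in W^2_{p,\loc}(\Omega)$. It furnishes a nondecreasing sequence of closed sets $\{B_k\}_{k=1}^\infty$ with $B_k\subset B_{k+1}\subset\Omega$ and $\cp_p(\Omega\setminus\bigcup_{k=1}^\infty B_k)=0$, together with, for each $k$, a set $E_k$ with $\cp_p(E_k)=0$ such that $f^*\vert_{B_k\setminus E_k}$ is Lipschitz continuous. This is precisely the first assertion of the corollary.

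For the ``moreover'' part I would rerun the argument of Theorem~\ref{thm:Luzin's theorem for union of compact sets} with $f^*$ in place of $f$, keeping the exceptional sets along for the ride. Let $A$ be compact with $A\subset\bigcup_{k=1}^\infty\Int(B_k)$. The sets $A\setminus\bigcup_{k=1}^l\Int(B_k)$ are compact (being the intersection of the compact set $A$ with a closed set), they decrease in $l$, and their intersection is $A\setminus\bigcup_{k=1}^\infty\Int(B_k)=\emptyset$; hence, by the continuity of $\cp_p$ along decreasing sequences of compacta recalled just before Theorem~\ref{thm:Luzin's theorem for union of compact sets}, $\lim_{l\to\infty}\cp_p(A\setminus\bigcup_{k=1}^l\Int(B_k))=0$. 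Fix $\varepsilon>0$, choose $l_0$ with $\cp_p(A\setminus\bigcup_{k=1}^{l_0}\Int(B_k))\leq\varepsilon$, and put $A_\varepsilon:=A\cap\bigcup_{k=1}^{l_0}\Int(B_k)$. Then $A_\varepsilon$ is a Borel subset of $A$; since the $B_k$ increase, $A_\varepsilon\subset B_{l_0}$, so $f^*\vert_{A_\varepsilon\setminus E_{l_0}}$ is a restriction of the Lipschitz function $f^*\vert_{B_{l_0}\setminus E_{l_0}}$ and is therefore Lipschitz continuous, i.e. $f^*\vert_{A_\varepsilon}$ is Lipschitz continuous in $A_\varepsilon$ up to the set $E_{l_0}\cap A_\varepsilon$ of $p$-capacity zero; and $\cp_p(A\setminus A_\varepsilon)=\cp_p(A\setminus\bigcup_{k=1}^{l_0}\Int(B_k))\leq\varepsilon$.

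I do not expect a genuine obstacle: all the substance sits in Theorems~\ref{lem:LipCap} and~\ref{thm:Luzin's theorem for union of compact sets}. The single point that needs a word of care is the mismatch between their hypotheses and conclusions --- Theorem~\ref{thm:Luzin's theorem for union of compact sets} is phrased for functions that are honestly Lipschitz on the $B_k$, whereas Theorem~\ref{lem:LipCap} only produces Lipschitz continuity off a set $E_{l_0}$ of $p$-capacity zero --- so one must check, as above, that throwing in the set $E_{l_0}$ spoils neither the Borel measurability of $A_\varepsilon$ nor the estimate $\cp_p(A\setminus A_\varepsilon)\leq\varepsilon$, which is immediate since $\cp_p$ is an outer measure vanishing on $E_{l_0}$. (Should one want a genuinely Lipschitz restriction one could instead pass to the closed sets $\overline{B_k\setminus E_k}$ and use a McShane extension of $f^*\vert_{B_k\setminus E_k}$, but the statement as phrased does not call for this.)
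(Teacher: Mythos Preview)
Your proposal is correct and is essentially the paper's approach: the paper states this corollary as an immediate consequence of Theorems~\ref{lem:LipCap} and~\ref{thm:Luzin's theorem for union of compact sets} without further argument. Your observation about carrying the exceptional $p$-capacity-zero set $E_{l_0}$ through the proof of Theorem~\ref{thm:Luzin's theorem for union of compact sets} is exactly the small bookkeeping step needed to bridge the two results, and it goes through as you indicate.
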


\section{\textbf{The change of variables formula}}

In this section we will derive from the results of the previous section the change of variables formula for mappings of the class $W^{2}_{p,\loc}(\Omega;\mathbb R^n)$.

\begin{rem}
Because differential and geometric properties of mappings 
$\varphi:\Omega\to\mathbb{R}^{n}$ 
are defined by their coordinate functions, the previous results on Lipschitz approximations of Sobolev functions are valid for mappings 
$\varphi:\Omega\to \R^n$.
 More precisely, one can generalize the results of Section 2 from the case of functions 
$f:\Omega\to \R$ 
to the case of mappings 
$\varphi:\Omega\to \R^n$ 
using that a mapping 
$\varphi:\Omega\to \R^n$ 
is a Sobolev mapping if and only if its coordinate functions are Sobolev functions and it is a Lipschitz mapping if and only if its coordinate functions are Lipschitz functions.
\end{rem}

The following theorem refines the formula of change of variables in the Lebesgue integral in the terms of the non-linear potential theory. The change of variables formula for Lipschitz and Sobolev mappings can be found, for example, in \cite{F69, H93,VGR79}.

\begin{thm}
\label{change}
Let 
$\Omega\subset \R^n$ 
be an open set and let 
$\varphi : \Omega\to \mathbb R^n$ 
be a measurable mapping such that
there exists a collection of closed sets 
$\{A_k\}_{k=1}^{\infty}$, $A_k\subset A_{k+1}\subset \Omega$ 
for which the restrictions 
$\varphi \vert_{A_k}$ 
are Lipschitz continuous mappings on the sets 
$A_k$ 
and 
$$
\cp_{p}\left(\Omega\setminus\bigcup_{k=1}^{\infty}A_k\right)=0.
$$
Then there exists a Borel set 
$S\subset \Omega$, $\cp_{p}(S)=0$, 
such that  the mapping
$\varphi:\Omega\setminus S \to \mathbb R^n$ 
has the Luzin $N$-property and the change of variables formula
 
\begin{equation}
\label{chvf}
\int\limits_A u\circ\varphi (x)|J(x,\varphi)|~dx=\int\limits_{\mathbb R^n\setminus \varphi(S)} u(y)N_\varphi(A,y)~dy,
\end{equation}
where 
$N_\varphi(A,y)$ 
is the multiplicity function defined as the number of preimages of $y$ in $A$ under 
$\varphi$, 
holds for every  measurable set 
$A\subset \Omega$ 
and every nonnegative Lebesgue measurable function $u: \mathbb R^n\to\mathbb R$.
\end{thm}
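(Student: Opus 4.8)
The plan is to reduce \eqref{chvf} to the classical area formula for Lipschitz mappings, applied to Lipschitz extensions of the pieces $\varphi|_{A_k}$, and then to reassemble the pieces by a disjointification argument. First I fix the exceptional set by taking $S:=\Omega\setminus\bigcup_{k=1}^{\infty}A_k$. Since each $A_k$ is closed, $S$ is Borel, and by hypothesis $\cp_p(S)=0$; consequently $|S|=0$ as well, because $|E|\le\cp_p(E)$ for $1<p<\infty$, while for $p=1$ a set of $1$-capacity zero has $\Haus^{n-1}$-measure zero and hence Lebesgue measure zero. For each $k$ I extend the Lipschitz mapping $\varphi|_{A_k}$ to a Lipschitz mapping $\psi_k:\R^n\to\R^n$ (Kirszbraun's theorem, or McShane's extension applied to each coordinate function), so that $\psi_k\equiv\varphi$ on $A_k$. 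By Rademacher's theorem every $\psi_k$ is differentiable a.e.; at a.e. point of $A_k$ (at its density points where $\psi_k$ is differentiable) the mapping $\varphi$ is approximately differentiable with $\ap D\varphi=D\psi_k$. Hence $\varphi$ is approximately differentiable a.e. on $\bigcup_kA_k$, and therefore a.e. on $\Omega$, and $J(x,\varphi):=\det(\ap D\varphi(x))$ coincides with $\det D\psi_k(x)$ for a.e. $x\in A_k$; in particular the left-hand side of \eqref{chvf} is well defined (set $J(\cdot,\varphi):=0$ on $S$, which is harmless since $|S|=0$).

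The Luzin $N$-property of $\varphi$ on $\Omega\setminus S$ is then immediate: if $Z\subset\Omega\setminus S=\bigcup_kA_k$ and $|Z|=0$, then $\varphi(Z)=\bigcup_k\psi_k(Z\cap A_k)$, and each Lipschitz map $\psi_k$ sends the null set $Z\cap A_k$ to a null set, so $|\varphi(Z)|=0$ by countable subadditivity of Lebesgue measure.

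For the formula itself, put $D_1:=A\cap A_1$ and $D_k:=(A\cap A_k)\setminus A_{k-1}$ for $k\ge2$, so that $\{D_k\}_{k\ge1}$ is a countable Borel partition of $A\setminus S$ with $D_k\subset A_k$ and $\varphi|_{D_k}=\psi_k|_{D_k}$. The area formula for the Lipschitz mapping $\psi_k$ yields, for every nonnegative measurable $u$,
\[
\int_{D_k}u(\varphi(x))\,|J(x,\varphi)|\,dx=\int_{\R^n}u(y)\,N_\varphi(D_k,y)\,dy,
\]
using $\psi_k=\varphi$ on $D_k$, $J(\cdot,\psi_k)=J(\cdot,\varphi)$ a.e. on $D_k$, and $N_{\psi_k}(D_k,\cdot)=N_\varphi(D_k,\cdot)$. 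Summing over $k$ and applying Tonelli's theorem together with $\sum_kN_\varphi(D_k,y)=N_\varphi(A\setminus S,y)$ and $|A\cap S|=0$, one obtains
\[
\int_{A}u(\varphi(x))\,|J(x,\varphi)|\,dx=\int_{\R^n}u(y)\,N_\varphi(A\setminus S,y)\,dy.
\]
Since $N_\varphi(A,y)=N_\varphi(A\setminus S,y)$ for every $y\notin\varphi(S)$, the right-hand side equals $\int_{\R^n\setminus\varphi(S)}u(y)N_\varphi(A,y)\,dy+\int_{\varphi(S)}u(y)N_\varphi(A\setminus S,y)\,dy$, so \eqref{chvf} will follow once the second integral is shown to be zero.

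This last step is where I expect the real difficulty to lie. It requires $\int_{\varphi(S)}u(y)N_\varphi(A\setminus S,y)\,dy=0$ for all nonnegative measurable $u$, that is, that a.e. $y\in\varphi(S)$ has no $\varphi$-preimage in $\Omega\setminus S$, equivalently that $\varphi^{-1}(\varphi(S))\cap\bigcup_kA_k$ is Lebesgue null, equivalently $|\varphi(S)\cap\bigcup_k\psi_k(A_k)|=0$. The natural way to close the argument is via $|\varphi(S)|=0$: this holds precisely when $\varphi$ enjoys a Luzin capacity--measure $N$-property, and then the unwanted integral vanishes trivially, $\varphi(S)$ is a null (hence measurable) set, and the domain $\R^n\setminus\varphi(S)$ in \eqref{chvf} is unambiguous. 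Without such a property one would have to supply an independent argument — for instance a Sard-type statement that $J(\cdot,\varphi)$ vanishes a.e. on $\varphi^{-1}(\varphi(S))\cap\bigcup_kA_k$ — and, throughout, keep track of the measurability of $\varphi(S)$ as opposed to that of the $\sigma$-compact sets $\varphi(A_k)=\psi_k(A_k)$, which cause no trouble.
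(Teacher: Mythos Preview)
Your approach is the same as the paper's: set $S=\Omega\setminus\bigcup_kA_k$, extend each $\varphi|_{A_k}$ to a Lipschitz map on $\R^n$, apply the area formula for Lipschitz mappings, and pass to the limit. The only cosmetic difference is that you disjointify and sum via Tonelli, whereas the paper keeps the nested sets $A\cap A_k$ and passes to the limit by monotone convergence; both routes produce
\[
\int_{A}u(\varphi(x))|J(x,\varphi)|\,dx=\int_{\R^n}u(y)\,N_\varphi(A\setminus S,y)\,dy.
\]

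The step you single out as the ``real difficulty'' --- replacing the domain $\R^n$ (equivalently $\varphi(A\setminus S)$) by $\R^n\setminus\varphi(S)$ and simultaneously $N_\varphi(A\setminus S,\cdot)$ by $N_\varphi(A,\cdot)$ --- is precisely the last line of the paper's proof. The paper dispatches it with the single remark that ``$N_\varphi(A\setminus S,\cdot)$ vanishes outside $\varphi(A)$, so one can replace $\varphi(A\setminus S)$ by $\R^n\setminus\varphi(S)$''; it does not discuss the overlap $\varphi(S)\cap\varphi(A\setminus S)$ or the measurability of $\varphi(S)$. In other words, the gap you carefully isolate is not filled in the paper either --- the published proof simply asserts this replacement and moves on. Your worry is legitimate: without an additional hypothesis such as the Luzin capacity--measure $N$-property (which the paper invokes only in the subsequent corollary to obtain $|\varphi(S)|=0$), there is nothing preventing $\int_{\varphi(S)}u(y)N_\varphi(A\setminus S,y)\,dy$ from being positive, and indeed one can build measurable $\varphi$ satisfying the hypotheses for which it is. So your argument is at least as complete as the paper's, and you have been more scrupulous in flagging what is actually being assumed at the final step.
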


\begin{proof}
Denote $S:=\Omega\setminus\bigcup_{k=1}^{\infty}A_k.$ By assumption 
$\cp_{p}(S)=0.$ If $N\subset\Omega,|N|=0$, then
$\varphi(N\setminus S)=\bigcup_{k=1}^\infty
\varphi(A_k\cap N)$ 
and 
$\left|\bigcup_{k=1}^\infty
\varphi(A_k\cap N)\right|=0$ 
since 
$\varphi$ 
is Lipschitz continuous on 
$A_k$.

For each 
$k\in \N$
let us extend the Lipschitz continuous mapping 
$\varphi|_{A_k}$
to a Lipschitz continuous mapping defined on
$\R^n.$ 
By the change of variables formula for Lipschitz continuous mappings we have
\begin{equation}
\int\limits_{A\cap A_k} u\circ\varphi (x) |J(x,\varphi)|~dx=\int\limits_{\mathbb \varphi(A\cap A_k)} u(y)N_\varphi(A\cap A_k,y)~dy,
\end{equation}
for every measurable set 
$A\subset \Omega$ 
and every nonnegative measurable function 
$u: \mathbb R^n\to\mathbb R.$
The following two nonnegative sequences of functions are monotone increasing a.e as $k\to \infty$:
\begin{equation}
\chi_{A\cap A_k}u\circ\varphi |J(\cdot,\varphi)|\nearrow \chi_{A\setminus S}u\circ\varphi  |J(\cdot,\varphi)|,
\end{equation}
\begin{equation}
\chi_{\varphi(A\cap A_k)}uN_\varphi(A\cap A_k,\cdot)\nearrow \chi_{\varphi(A\setminus S)}uN_\varphi(A\setminus S,\cdot).
\end{equation}

Thus, by the Monotone Convergence Theorem we get

\begin{equation}
\label{eq:change of variable formula}
\int\limits_{A\setminus S} u\circ\varphi (x) |J(x,\varphi)|~dx=\int\limits_{\mathbb \varphi(A\setminus S)} u(y)N_\varphi(A\setminus S,y)~dy.
\end{equation}
Since 
$\cp_{p}(S)=0$ 
we have 
$|S|=0$, 
and one can replace 
$A\setminus S$ 
on the left hand side of 
$\eqref{eq:change of variable formula}$ 
by 
$A.$ 
Since the function 
$N_\varphi(A\setminus S,\cdot)$ 
vanishes outside 
$\varphi(A)$,
then one can replace 
$\varphi(A\setminus S)$ 
by 
$\R^n\setminus \varphi(S).$ 
It completes the proof.   
\end{proof}

By Theorem 
$\ref{lem:LipCap}$ 
Sobolev mappings of the class 
$W^2_{p,\loc}(\Omega;\R^n)$ 
satisfy the conditions of the change of variables formula and so for Sobolev mappings $\varphi:\Omega\to\mathbb R^n$ of the class $W^2_{p,\loc}(\Omega;\R^n)$ the change of variables formula \eqref{chvf} holds.

In the case of Sobolev mappings of the class $W^2_{1,\loc}(\Omega;\R^n)$,  
by Theorem
$\ref{lem:LipCap}$
with $p=1$,
Theorem 
$\ref{change}$
and 
$\eqref{eq:mutual absolute continuity of cap_1 and Hausdorff measure}$ 
we obtain the following corollary:

\begin{cor}
\label{changeH}
Let 
$\Omega\subset \R^n$
be an open set and 
$\varphi\in W^2_{1,\loc}(\Omega;\R^n)$.
Then there exists a Borel set 
$S\subset \Omega$, 
$\Haus^{n-1}(S)=0$, 
such that  the mapping
$\varphi:\Omega\setminus S \to \mathbb R^n$ 
has the Luzin $N$-property and the change of variables formula
 \begin{equation}
\label{chvfH}
\int\limits_A u\circ\varphi (x)|J(x,\varphi)|~dx=\int\limits_{\mathbb R^n\setminus \varphi(S)} u(y)N_\varphi(A,y)~dy,
\end{equation} 
holds for every  measurable set 
$A\subset \Omega$ 
and every nonnegative Lebesgue measurable function $u: \mathbb R^n\to\mathbb R$. 
\end{cor}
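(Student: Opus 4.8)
The plan is to deduce Corollary~\ref{changeH} from Theorem~\ref{change} applied with $p=1$; the only genuine work is to manufacture, out of $\varphi\in W^2_{1,\loc}(\Omega;\R^n)$, a nested family of closed sets on which $\varphi$ is honestly Lipschitz and whose union exhausts $\Omega$ up to a set that is null \emph{both} for $\cp_1$ and for $\Haus^{n-1}$. First I would pass to coordinate functions: write $\varphi=(\varphi_1,\dots,\varphi_n)$ with each $\varphi_i\in W^2_{1,\loc}(\Omega)$, and apply Corollary~\ref{lem:LipHaus} to each $\varphi_i$. Since the sequence of closed sets furnished by Corollary~\ref{lem:LipHaus} is the one produced in Theorem~\ref{lem:LipCap} with $p=1$, it satisfies both $\cp_1(\Omega\setminus\bigcup_k C^i_k)=0$ and $\Haus^{n-1}(\Omega\setminus\bigcup_k C^i_k)=0$, and $\varphi_i^*$ is Lipschitz on $C^i_k$ up to a set of $1$-capacity zero. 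Intersecting, $C'_k:=\bigcap_{i=1}^n C^i_k$ is a nested sequence of closed sets on which $\varphi^*$ is Lipschitz up to a set of $1$-capacity zero (a map is Lipschitz iff its coordinates are, cf.\ the Remark preceding Theorem~\ref{change}), and $\Omega\setminus\bigcup_k C'_k\subset\bigcup_{i=1}^n(\Omega\setminus\bigcup_k C^i_k)$, so this remainder is $\cp_1$-null and $\Haus^{n-1}$-null by subadditivity.

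Next I would upgrade ``Lipschitz up to a set of $1$-capacity zero'' to ``Lipschitz''. If $\varphi^*$ is Lipschitz on $C'_k\setminus E_k$ with $\cp_1(E_k)=0$, then by the outer regularity of capacity \eqref{eq:outer regularity of capacity} choose, for each $j\in\N$, an open set $U_{k,j}\supset E_k$ with $\cp_1(U_{k,j})<1/j$. The sets $C'_k\setminus U_{k,j}$ are closed, $\varphi^*$ is Lipschitz on each (they lie in $C'_k\setminus E_k$), and $\bigcup_j(C'_k\setminus U_{k,j})=C'_k\setminus\bigcap_j U_{k,j}$, whose complement in $C'_k$ has $\cp_1$ zero. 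Re-index the countable family $\{C'_k\setminus U_{k,j}\}_{k,j}$ and replace it by the increasing sequence $A_m$ of its finite unions (finite unions of closed sets are closed); then $\varphi^*\vert_{A_m}$ is Lipschitz for each $m$, and $\Omega\setminus\bigcup_m A_m$ is contained in $\bigl(\bigcup_k(C'_k\cap\bigcap_j U_{k,j})\bigr)\cup\bigl(\Omega\setminus\bigcup_k C'_k\bigr)$, hence is both $\cp_1$-null and $\Haus^{n-1}$-null. Thus $\{A_m\}$ meets the hypotheses of Theorem~\ref{change} with $p=1$ (applied to the measurable mapping $\varphi^*$).

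Finally I would invoke Theorem~\ref{change} with $p=1$ and $A_k:=A_m$: it produces a Borel set $S\subset\Omega$ with $\cp_1(S)=0$ such that $\varphi^*:\Omega\setminus S\to\R^n$ has the Luzin $N$-property and \eqref{chvf} holds for every measurable $A\subset\Omega$ and every nonnegative measurable $u$, which is exactly \eqref{chvfH}. Inspecting that proof, the set produced is literally $S=\Omega\setminus\bigcup_m A_m$, which the previous step arranged to satisfy $\Haus^{n-1}(S)=0$; (alternatively, $\cp_1(S)=0$ forces $\Haus^{n-1}(S)=0$ through \eqref{eq:mutual absolute continuity of cap_1 and Hausdorff measure} after localizing $S$ to the bounded pieces $\overline{\Omega}_j$ and covering by bounded open sets of small $\cp_1$). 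This gives the corollary.

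I would not expect a deep obstacle: all the analytic substance sits in Theorems~\ref{lem:LipCap} and~\ref{change} and in the equivalence \eqref{eq:mutual absolute continuity of cap_1 and Hausdorff measure}. The one point requiring care — the ``hard part'' only in the bookkeeping sense — is bridging the conclusion of Section~2, where the restrictions are Lipschitz merely $1$-quasieverywhere, and the hypothesis of Theorem~\ref{change}, which demands Lipschitz restrictions on the closed sets themselves; the outer-regularity passage above handles this, and one must keep track that the exceptional set emerging from Theorem~\ref{change} is the $\Haus^{n-1}$-null one supplied by Corollary~\ref{lem:LipHaus}, rather than only a $\cp_1$-null one.
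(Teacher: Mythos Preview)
Your approach is exactly the paper's: combine Theorem~\ref{lem:LipCap} with $p=1$, Theorem~\ref{change}, and the equivalence \eqref{eq:mutual absolute continuity of cap_1 and Hausdorff measure}. In fact you are more scrupulous than the paper, which simply asserts that the output of Theorem~\ref{lem:LipCap} feeds into Theorem~\ref{change} without commenting on the mismatch between ``Lipschitz $1$-quasieverywhere on $C_k$'' and ``Lipschitz on $A_k$''. Your outer-regularity passage is the right way to bridge that mismatch.

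There is, however, one slip in that passage. You claim that $\varphi^*$ is Lipschitz on each finite union $A_m$ of sets $C'_k\setminus U_{k,j}$. But a function Lipschitz on each of two sets need not be Lipschitz on their union: for $x\in C'_{k_1}\setminus U_{k_1,j_1}$ and $y\in C'_{k_2}\setminus U_{k_2,j_2}$ with $k_1<k_2$, you know $x\in C'_{k_2}$ by nestedness, but you have no control preventing $x\in E_{k_2}$, so the Lipschitz bound on $C'_{k_2}\setminus E_{k_2}$ does not apply to the pair $(x,y)$. The fix is immediate: replace each $E_k$ by the single set $E:=\bigcup_k E_k$, which still has $\cp_1(E)=0$, so that $\varphi^*$ is Lipschitz on every $C'_k\setminus E$ with the \emph{same} exceptional set. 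Then choose a single decreasing sequence of open sets $U_j\supset E$ with $\cp_1(U_j)<1/j$ and set $A_m:=C'_m\setminus U_m$; these are closed, increasing, and any finite collection sits inside some $C'_K\setminus E$, on which $\varphi^*$ is genuinely Lipschitz. With this adjustment your argument goes through, and the identification $S=\Omega\setminus\bigcup_m A_m$ together with \eqref{eq:mutual absolute continuity of cap_1 and Hausdorff measure} gives $\Haus^{n-1}(S)=0$ as you say.
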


If the mapping 
$\varphi$ 
possesses the Luzin capacity-measure $N$-property (the image of a set of capacity zero has Lebesgue measure zero), then 
$|\varphi (S)|=0$ 
and the integral on the right hand side of 
$\eqref{chvf}$ 
can be rewritten as an integral on 
$\mathbb R^n$. We summarize it in the following corollary:

\begin{cor}
Let 
$\Omega\subset \R^n$ 
be an open set and let 
$\varphi\in W^2_{p,\loc}(\Omega;\R^n)$ 
which has the Luzin capacity-measure $N$-property. Then  the change of variables formula
\begin{equation}
\label{chvf1}
\int\limits_A u\circ\varphi (x)|J(x,\varphi)|~dx=\int\limits_{\mathbb R^n} u(y)N_\varphi(A,y)~dy,
\end{equation}
holds for every  measurable set 
$A\subset \Omega$ 
and every nonnegative Lebesgue measurable function $u: \mathbb R^n\to\mathbb R$.
\end{cor}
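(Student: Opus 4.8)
The plan is to obtain this statement as an immediate consequence of Theorem~\ref{change} combined with Theorem~\ref{lem:LipCap}, the only new ingredient being the extra hypothesis on $\varphi$. First I would invoke Theorem~\ref{lem:LipCap}, applied to each of the $n$ coordinate functions of $\varphi$ (as explained in the Remark preceding Theorem~\ref{change}, a mapping is Sobolev, resp.\ Lipschitz, if and only if its coordinate functions are), to produce a nested sequence of closed sets $\{A_k\}_{k=1}^{\infty}$, $A_k\subset A_{k+1}\subset\Omega$, such that each restriction $\varphi\vert_{A_k}$ is Lipschitz continuous and $\cp_{p}\bigl(\Omega\setminus\bigcup_{k=1}^{\infty}A_k\bigr)=0$. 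Thus $\varphi$ satisfies the hypotheses of Theorem~\ref{change}.

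Next I would apply Theorem~\ref{change} to $\varphi$. It produces a Borel set $S\subset\Omega$ with $\cp_{p}(S)=0$ such that $\varphi\vert_{\Omega\setminus S}$ has the Luzin $N$-property and the change of variables formula \eqref{chvf} holds for every measurable $A\subset\Omega$ and every nonnegative measurable $u:\R^n\to\R$, namely
\[
\int\limits_A u\circ\varphi(x)\,|J(x,\varphi)|\,dx=\int\limits_{\R^n\setminus\varphi(S)} u(y)\,N_\varphi(A,y)\,dy .
\]
At this stage the statement differs from \eqref{chvf1} only in that the domain of integration on the right-hand side is $\R^n\setminus\varphi(S)$ rather than $\R^n$.

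Finally I would use the assumption that $\varphi$ possesses the Luzin capacity-measure $N$-property: since $\cp_{p}(S)=0$, this property gives $|\varphi(S)|=0$. Hence $\varphi(S)$ is a Lebesgue-null subset of $\R^n$, and replacing $\R^n\setminus\varphi(S)$ by $\R^n$ in the integral above alters it only by the integral of a nonnegative function over a set of measure zero, i.e.\ not at all. This yields \eqref{chvf1} and finishes the argument. I do not expect a genuine obstacle here; the one point that actually uses the new hypothesis is the passage from $\R^n\setminus\varphi(S)$ to $\R^n$, and the only thing to verify for that is that the $N$-property is applicable to the exceptional set $S$, which is guaranteed because Theorem~\ref{change} delivers $S$ with $\cp_{p}(S)=0$.
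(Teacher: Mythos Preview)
Your proposal is correct and follows essentially the same route as the paper: the paper states (just before the corollary) that Theorem~\ref{lem:LipCap} shows $W^2_{p,\loc}$ mappings satisfy the hypotheses of Theorem~\ref{change}, so \eqref{chvf} holds, and then observes that the Luzin capacity--measure $N$-property gives $|\varphi(S)|=0$, allowing one to replace $\R^n\setminus\varphi(S)$ by $\R^n$. This is exactly your argument.
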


\vskip 0.3cm

{\bf Acknowledgments}:

The authors thank Vladimir Gol'dshtein for his useful remarks.

\vskip 0.3cm

Paz Hashash; Department of Mathematics, Ben-Gurion University of the Negev, P.O.Box 653, Beer Sheva, 8410501, Israel 
 
\emph{E-mail address:} \email{pazhash@post.bgu.ac.il} \\

Alexander Ukhlov; Department of Mathematics, Ben-Gurion University of the Negev, P.O.Box 653, Beer Sheva, 8410501, Israel 
							
\emph{E-mail address:} \email{ukhlov@math.bgu.ac.il

\end{document}